\documentclass{article}
\usepackage{amssymb,amsmath,latexsym,amsthm}
\usepackage[english]{babel}
\usepackage[mathscr]{eucal}

\allowdisplaybreaks

\newtheorem{theorem}{Theorem}
\renewcommand{\thetheorem}{\Alph{theorem}}

\newtheorem{lemma}{Lemma}

\newtheorem*{maintheorem}{MAIN THEOREM}
\newtheorem*{proposition}{Proposition}
\newtheorem*{TPC}{Twin Prime Conjecture}
\newtheorem*{PC}{Polignac's Conjecture}
\newtheorem*{ConjDHLk2}{Conjecture DHL {\boldmath$(k,2)$}}
\newtheorem*{ConjDHLstark2}{Conjecture DHL* {\boldmath$(k,2)$}}

\theoremstyle{definition}
\newtheorem{definition}{Definition}

\theoremstyle{remark}

\newtheorem*{remark}{Remark}

\def\beq{\begin{equation}}
\def\eeq{\end{equation}}

\numberwithin{equation}{section}
\hoffset=-10mm
\setlength{\textheight}{215mm}
\setlength{\textwidth}{150mm}

\begin{document}

\title{Polignac Numbers, Conjectures of Erd\H{o}s on Gaps between Primes, Arithmetic Progressions in Primes, and the Bounded Gap Conjecture}

\author{by\\
J\'anos PINTZ\\
R\'enyi Mathematical Institute of the Hungarian Academy
of Sciences\\
Budapest, Re\'altanoda u. 13--15\\
H-1053 Hungary\\
e-mail: {pintz@renyi.hu}}

\date{}

\footnotetext{Supported by OTKA grants K~100291, NK~104183 and ERC-AdG.\ 228005.}

\numberwithin{equation}{section}

\maketitle

\begin{abstract}
In the present work we prove a number of surprising results about gaps
between consecutive primes and arithmetic progressions in the sequence of
generalized twin primes which could not have been proven without the recent
fantastic achievement of Yitang Zhang about the existence of bounded gaps
between consecutive primes.
Most of these results would have belonged to the category of science fiction a decade ago.
However, the presented results are far from being immediate consequences of Zhang's famous theorem: they require
various new ideas, other important properties of the applied sieve function and
a closer analysis of the methods of Goldston--Pintz--Y{\i}ld{\i}r{\i}m, Green--Tao, and
Zhang, respectively.

\end{abstract}

\section{Introduction}
\label{sec:1}

1.1. Very recently Yitang Zhang, in a fantastic breakthrough solved the Bounded Gap Conjecture, a term formulated in recent works of mine, often in collaboration with D. Goldston and C. Y{\i}ld{\i}r{\i}m.
Let $p_n$ denote the $n$\textsuperscript{th} prime, $\mathcal P$ the set of all primes and
\beq
\label{eq:1.1}
d_n = p_{n + 1} - p_n
\eeq
the $n$\textsuperscript{th} difference between consecutive primes.

\begin{theorem}[{Y. Zhang \cite[2013]{Zhang2013}}]
\label{th:A}
$\liminf\limits_{n \to \infty} d_n \leq 7\cdot 10^7$.
\end{theorem}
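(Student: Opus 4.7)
My plan is to follow the Goldston-Pintz-Y{\i}ld{\i}r{\i}m (GPY) sieve framework, combined with a new distribution result for primes in arithmetic progressions. Fix an admissible $k$-tuple $\mathcal{H} = \{h_1,\dots,h_k\}$, meaning that for every prime $p$ the reductions $h_i \pmod{p}$ do not cover all residue classes. I seek nonnegative weights $w(n)$ supported on $n \asymp N$ such that
\beq
S := \sum_{N < n \leq 2N} w(n) \Bigl( \sum_{i=1}^k \mathbf{1}_{\mathcal{P}}(n+h_i) - 1 \Bigr) > 0,
\eeq
for then some $n$ in this range has at least two of the $n+h_i$ prime, giving $\liminf_n d_n \leq \max_i h_i - \min_i h_i$. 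With the standard GPY choice
\beq
w(n) = \Bigl( \sum_{\substack{d \mid \prod_i(n+h_i) \\ d \leq R}} \mu(d) \log(R/d)^{k+\ell} \Bigr)^2,
\eeq
the asymptotic evaluation of $S$ requires a level of distribution $\theta$ for the primes, i.e.\ that for every $A$,
\beq
\sum_{q \leq N^\theta} \max_{(a,q)=1} \Bigl| \sum_{\substack{n \leq N \\ n \equiv a \pmod{q}}} \Lambda(n) - \frac{N}{\varphi(q)} \Bigr| \ll_A \frac{N}{(\log N)^A}.
\eeq
Bombieri-Vinogradov yields $\theta = 1/2$, which falls just short of success; any fixed $\theta > 1/2$ would drive $S > 0$ for suitable $k,\ell$ and thus produce bounded gaps.

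The crux, and the main obstacle, is to break the $\theta = 1/2$ barrier. Zhang's contribution is to establish the displayed estimate at $\theta = 1/2 + 2\varpi$ for an explicit $\varpi > 0$, provided one restricts to squarefree moduli $q$ all of whose prime factors lie below $N^\varpi$ (smooth moduli), and fixes the residue class $a$ rather than maximizing over it. I would follow Zhang's route: decompose the von Mangoldt function through a Heath-Brown identity, split the resulting bilinear forms into Type I, Type II, and Type III pieces, and bound each by a dispersion argument in the tradition of Bombieri-Friedlander-Iwaniec and Fouvry. The smoothness of $q$ is used to factor $q = q_1 q_2$ with each factor in a controlled range, reducing the key exponential sums to Weil-type bounds that ultimately rest on Deligne's estimates for complete sums over $\mathbf{F}_p$. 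This is where essentially all the new analytic work resides.

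With this restricted distribution theorem in hand, I would modify the GPY sieve so that only smooth squarefree $d$ occur in the divisor sum defining $w(n)$; one checks that the main-term calculation is essentially unaffected, since the density of smooth $d$ up to $R$ is sufficient for the asymptotics. Optimizing in $\ell$ then yields $S > 0$ for every $k \geq k_0(\varpi)$; Zhang's numerical choice $\varpi = 1/1168$ gives $k_0 = 3{,}500{,}000$. Finally, I would exhibit an admissible $k_0$-tuple of diameter at most $7\cdot 10^7$, for instance by taking the first $k_0$ primes exceeding $k_0$, which by the prime number theorem fit inside an interval of this length. Combining these ingredients yields $\liminf_n d_n \leq 7\cdot 10^7$.
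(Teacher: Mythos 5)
Your outline is essentially Zhang's own argument --- the GPY sieve with weights restricted to smooth moduli, a Bombieri--Vinogradov-type estimate beyond level $1/2$ for smooth moduli in prescribed residue classes via dispersion and Deligne-type bounds, and an admissible $3.5\times 10^6$-tuple of diameter below $7\cdot 10^7$ --- which is exactly the route the paper attributes to Zhang; the paper itself does not prove Theorem~A but cites it, describing precisely these ingredients (neglecting non-smooth moduli, and the weakened level $1/2+1/584$ estimate). So your proposal matches the intended proof in structure, with the understanding that the deep analytic estimates you defer to (the Type I/II/III bilinear and trilinear bounds) constitute the actual substance of Zhang's work.
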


Earlier, in a joint work with D. Goldston and C. Y{\i}ld{\i}r{\i}m we showed \cite[2009]{GPY2009} this under the deep unproved condition that primes have a distribution level greater than $1/2$, in other words, under the assumption that the exponent $1/2$ in the Bombieri--Vinogradov Theorem can be improved.

\begin{theorem}[{\cite[2009]{GPY2009}}]
\label{th:B}
If the primes have a distribution level $\vartheta > 1/2$, then with a suitable explicit constant $C(\vartheta)$ we have
\beq
\label{eq:1.2}
\liminf_{n \to \infty} d_n \leq C(\vartheta).
\eeq
If the Elliott--Halberstam Conjecture (EH) is true, i.e. $\vartheta = 1$ or, at least, $\vartheta > 0.971$ then
\beq
\label{eq:1.3}
\liminf_{n \to \infty} d_n \leq 16.
\eeq
\end{theorem}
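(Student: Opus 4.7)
The plan is to deploy the Selberg-type sieve of Goldston--Pintz--Y{\i}ld{\i}r{\i}m. Fix an admissible $k$-tuple $\mathcal H = \{h_1,\dots,h_k\}$ (so that $|\mathcal H \bmod p| < p$ for every prime $p$), and let $H(\mathcal H) = h_k - h_1$ denote its diameter; this $H$ is what will ultimately bound $\liminf d_n$. With $P_{\mathcal H}(n) = \prod_{i=1}^k (n+h_i)$ and $R = N^{\vartheta/2 - \varepsilon}$, introduce the truncated divisor sum
\[
\Lambda_R(n; \mathcal H, k+\ell) = \frac{1}{(k+\ell)!}\sum_{\substack{d \mid P_{\mathcal H}(n) \\ d \leq R}} \mu(d)\Bigl(\log \tfrac{R}{d}\Bigr)^{k+\ell},
\]
where $\ell$ is a free nonnegative integer parameter. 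The goal is to establish positivity, for suitable $k$ and $\ell$, of
\[
S(N) = \sum_{N < n \leq 2N} \Bigl(\sum_{i=1}^k \theta(n+h_i) - \log(3N)\Bigr) \Lambda_R(n; \mathcal H, k+\ell)^2;
\]
any single positive summand forces at least two of the $n+h_i$ to be prime, whence $d_m \leq H$ for some $p_m \in (N, 2N]$.

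The evaluation of $S$ splits into two natural pieces. Expanding the square and interchanging summations reduces the unweighted part to a sum over $n$ of the indicator of $[d_1, d_2] \mid P_{\mathcal H}(n)$, and reduces the $\theta$-weighted part to sums of $\theta$ over arithmetic progressions of modulus at most $[d_1, d_2] \leq R^2$. The distribution level $\vartheta$ is precisely what is needed to guarantee that the accumulated error, averaged over moduli via a Bombieri--Vinogradov type input, remains $o(N(\log R)^{k+2\ell+1})$, which pins down the admissible range $R \leq N^{\vartheta/2 - \varepsilon}$. The main terms are then extracted by the Perron-type contour-integral identity of GPY; they take the form $c_j(k,\ell)\,\mathfrak S(\mathcal H)\, N (\log R)^{k+2\ell+j}$ for $j \in \{0,1\}$, with $\mathfrak S(\mathcal H)$ the usual singular series. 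Positivity of $S$ thereby reduces to a single combinatorial inequality relating $\vartheta$, $k$, $\ell$ and the ratio $\log N / \log R$; after optimising $\ell \asymp \sqrt k$ and letting $k \to \infty$, the inequality collapses to $2\vartheta > 1$. Hence any $\vartheta > 1/2$ suffices, and $C(\vartheta)$ may be taken as the diameter of the narrowest admissible $k$-tuple with $k = k(\vartheta)$ the smallest integer satisfying the inequality.

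For the Elliott--Halberstam strengthening we may take $\vartheta$ arbitrarily close to $1$, and the numerical optimisation of the positivity inequality shows that $k = 7$ is already enough when $\vartheta = 1$, while a sharper balancing of $\ell$ yields $k = 6$ provided $\vartheta > 0.971$; the minimum diameter of an admissible $6$-tuple is $16$, realised for instance by $\{0,4,6,10,12,16\}$, which gives \eqref{eq:1.3}. The main obstacle is the sieve main-term computation itself: the nonstandard weighting $(\log R/d)^{k+\ell}$ with $\ell > 0$ is essential, since the classical Selberg choice $\ell = 0$ would only deliver the positivity inequality at the unattainable threshold $\vartheta \geq 1$. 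The extra power manufactures precisely the binomial-coefficient ratio that pulls the threshold down to $\vartheta > 1/2$, and verifying that the combinatorics work out at the admissible endpoint $\ell \asymp \sqrt k$ is the technical heart of the argument.
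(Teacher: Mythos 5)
The paper itself gives no proof of Theorem~B: it is quoted from \cite[2009]{GPY2009}, so your sketch has to be measured against the argument of that paper, and your framework is indeed the same one (the truncated divisor sums $\Lambda_R(n;\mathcal H,k+\ell)$, the sum $S(N)$ with the $\theta$-weights minus $\log 3N$, level of distribution entering through $R=N^{\vartheta/2-\varepsilon}$, and the limit $k\to\infty$, $\ell\asymp\sqrt k$ collapsing the positivity condition to $2\vartheta>1$). That part of your write-up is essentially correct and proves the first assertion, $\liminf d_n\le C(\vartheta)$ for any $\vartheta>1/2$.

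The second assertion is where you have a genuine gap. The positivity condition coming from the single-parameter weight is
\begin{equation*}
\frac{2k}{k+2\ell+1}\cdot\frac{2\ell+1}{\ell+1}\cdot\frac{\vartheta}{2}>1 ,
\end{equation*}
and for $k=6$ the left factor is maximized at $\ell=1$, where it equals $2\cdot\frac{\vartheta}{2}=\vartheta$; so no choice of the integer $\ell$ (no ``sharper balancing'') makes $k=6$ admissible for any $\vartheta\le 1$, not even under EH. Your fallback claim that ``$k=7$ is already enough when $\vartheta=1$'' is true ($\ell=1$ gives the requirement $\vartheta>20/21$), but the narrowest admissible $7$-tuple has diameter $20$, so this route only yields $\liminf d_n\le 20$, not the stated $16$. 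To reach $k=6$ and hence $16$, GPY do something you have not included: they replace $\Lambda_R(n;\mathcal H,k+\ell)^2$ by the square of a linear combination $\sum_{\ell}a_\ell\Lambda_R(n;\mathcal H,k+\ell)$ (this is exactly why their Propositions~1 and~2 are stated for mixed products with two different parameters $\ell_1,\ell_2$), and the numerical optimization of the resulting quadratic-form ratio is what produces the threshold $\vartheta>0.971$ for $k=6$; the bound $16$ then comes, as you say, from the narrowest admissible $6$-tuple such as $\{0,4,6,10,12,16\}$. Without this polynomial/linear-combination refinement your argument proves \eqref{eq:1.2} but not \eqref{eq:1.3}.
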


We say that $\vartheta$ is a level of distribution of primes if
\beq
\label{eq:1.4}
\sum_{q \leq X^{\vartheta - \varepsilon}} \max_{\substack{a\\ (a, q) = 1}} \Biggl| \sum_{\substack{p \equiv a(\text{\rm mod }q)\\
p\leq X}} \log p - \frac{X}{\varphi(q)}\Biggr| \ll_{A, \varepsilon} \frac{X}{(\log X)}
\eeq
for any $A, \varepsilon > 0$.
The Bombieri--Vinogradov Theorem asserts that $\vartheta = 1/2$ is a level of distribution and the Elliott--Halberstam Conjecture
\cite[1970]{EH1970} asserts that $\vartheta = 1$ is also an admissible level.
Zhang could not prove \eqref{eq:1.4} with a level $> 1/2$ but showed that

(i) it is possible to neglect in \eqref{eq:1.4} all moduli~$q$ with a prime divisor $> X^b$, where $b$ is any constant, and

(ii) managed to show for these smooth moduli a result which is similar but weaker than the analogue of \eqref{eq:1.4} with $\vartheta = 1/2 + 1/584$, which, however suits to apply our method of Theorem~\ref{th:B}.

I have to remark that the present author mentioned the phenomenon (i) in his lecture at the conference of the American Institute of Mathematics in November 2005, Palo Alto, and worked it out in a joint paper with Y. Motohashi \cite[2008]{MP2008}, however, without being able to show~(ii).

The present author worked out recently various consequences of a hypothetical distribution level $\vartheta > 1/2$ (\cite[2010]{Pin2010a} and
\cite[2013]{Erdoskotet}) which partly appeared with proofs \cite[2010]{Pin2010a}, partly exist at a level of announcements \cite[2013]{Erdoskotet}.
The purpose of the present work is to show that most of these conditional results can be shown unconditionally using Zhang's fantastic
result \cite[2013]{Zhang2013} or in some sense, his method, coupled with other ideas.

\section{History of the problems. Formulation of the results}
\label{sec:2}

\subsection{Approximations to the Twin Prime Conjecture}
\label{sec:2.1}

The problem of finding small gaps between primes originates from the

\begin{TPC}
$\liminf\limits_{n \to \infty} d_n = 2$.
\end{TPC}

Since by the Prime Number Theorem the average gap size is $\log n$, Hardy and Littlewood considered first already in 1926 in an unpublished manuscript (see, however \cite[1940]{Ran1940}) the upper estimation of
\beq
\label{eq:2.1}
\Delta_1 = \liminf_{n \to \infty} \frac{d_n}{\log n}
\eeq
and showed $\Delta_1 \leq 2/3$ under the assumption of the Generalized Riemann Hypothesis (GRH).

Erd\H{o}s \cite[1940]{Erd1940} was the first to show unconditionally
\beq
\label{eq:2.2}
\Delta_1 < 1 - c_0, \quad c_0 > 0
\eeq
with an unspecified but explicitly calculable positive constant~$c_0$.
The full history with about 12--15 improvements concerning the value of $\Delta_1$ is contained in \cite[2009]{GPY2009},
so we list here just the most important steps:
\begin{align}
\label{eq:2.3}
\Delta_1 &< 0.4666 \ \text{ (Bombieri, Davenport \cite[1966]{BD1966})},\\
\label{eq:2.4}
\Delta_1 &< 0.2485 \ \text{ (H. Maier \cite[1988]{Mai1988})}.
\end{align}

This was the best result until 2005 when we showed in a joint work with D. Goldston and C. Y{\i}ld{\i}r{\i}m what we called the Small Gap Conjecture:

\begin{theorem}[{\cite[2009]{GPY2009}}]
\label{th:C}
$\Delta_1 = 0$.
\end{theorem}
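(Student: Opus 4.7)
The plan is to adapt the Goldston-Pintz-Y{\i}ld{\i}r{\i}m sieve method in a quantitative form. For every $\nu>0$ I will construct a non-negative weighted sum whose positivity forces two primes to lie in some window $(n,n+\nu\log N]$, producing an $m\geq N$ with $d_m<\nu\log m$; since $\nu$ is arbitrary, this yields $\Delta_1=0$. Fix an admissible $k$-tuple $\mathcal{H}=\{h_1,\ldots,h_k\}\subset[1,H]$ with $H=\lfloor\nu\log N\rfloor$, set $P_\mathcal{H}(n)=\prod_{i=1}^k(n+h_i)$, and use the truncated divisor-sum weights
\[
\Lambda_R(n;\mathcal{H},\ell)=\frac{1}{(k+\ell)!}\sum_{\substack{d\mid P_\mathcal{H}(n)\\ d\le R}}\mu(d)\Bigl(\log\tfrac{R}{d}\Bigr)^{k+\ell}.
\]
Form
\[
S(N)=\sum_{\mathcal{H}}\sum_{N<n\le 2N}\Bigl(\sum_{1\le h_0\le H}\theta(n+h_0)-\log(3N)\Bigr)\Lambda_R(n;\mathcal{H},\ell)^{2},
\]
where the outer sum runs over admissible $k$-subsets $\mathcal{H}\subset[1,H]$. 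If $S(N)>0$, some summand is positive, which (since each $\theta(n+h_0)<\log(3N)$ for $n\leq 2N$) forces at least two primes in $(n,n+H]$.

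Next I would evaluate $S(N)$ asymptotically. With $R=N^{1/4-\varepsilon}$ --- permissible because the Bombieri-Vinogradov theorem supplies level of distribution $\vartheta=1/2$ --- a contour-integral analysis of the associated Dirichlet series yields
\[
\sum_{N<n\le 2N}\Lambda_R^{2}\sim\frac{1}{(k+2\ell)!}\binom{2\ell}{\ell}\mathfrak{S}(\mathcal{H})\,N(\log R)^{k+2\ell},
\]
while for $h_0\in\mathcal{H}$,
\[
\sum_{N<n\le 2N}\theta(n+h_0)\Lambda_R^{2}\sim\frac{1}{(k+2\ell+1)!}\binom{2\ell+2}{\ell+1}\mathfrak{S}(\mathcal{H})\,N(\log R)^{k+2\ell+1},
\]
and for $h_0\notin\mathcal{H}$ the same expression holds with $\binom{2\ell+2}{\ell+1}$ replaced by $\binom{2\ell}{\ell}$ and $\mathfrak{S}(\mathcal{H})$ replaced by $\mathfrak{S}(\mathcal{H}\cup\{h_0\})$. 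Applying Gallagher's average $\sum_{|\mathcal{H}|=k,\,\mathcal{H}\subset[1,H]}\mathfrak{S}(\mathcal{H})\sim\binom{H}{k}$ (which also gives $\sum_{\mathcal{H}}\sum_{h_0\in[1,H]\setminus\mathcal{H}}\mathfrak{S}(\mathcal{H}\cup\{h_0\})\sim(H-k)\binom{H}{k}$ via the decomposition of each $(k+1)$-subset in $k+1$ ways), the positivity of $S(N)$ reduces to
\[
\log R\cdot\left[\frac{2k(2\ell+1)}{(\ell+1)(k+2\ell+1)}+\frac{H-k}{k+2\ell+1}\right]>\log(3N)+o(\log N).
\]

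Finally I would optimize. With $\ell\sim\sqrt{k}$ the first bracket term tends to $4$ as $k\to\infty$, so at $\log R=(1/4-\varepsilon)\log N$ it alone contributes only $(1-O(\varepsilon))\log N$ --- the borderline failure that explains why Theorem~\ref{th:B} assumes $\vartheta>1/2$. The essential gain comes from the second bracket term, which at $H=\nu\log N$ contributes $\tfrac{(H-k)\log R}{k+2\ell+1}\sim\tfrac{\nu(\log N)^{2}}{4k}$; whenever $k=o(\nu\log N)$ this comfortably exceeds the deficit $O(\varepsilon\log N)$. The choice $k\sim\nu\log N/(\log\log N)^{2}$ simultaneously satisfies this inequality and the Hensley-Richards bound $k\log k\lesssim\nu\log N$ needed for admissible $k$-tuples of diameter $\le\nu\log N$ to exist, so $S(N)>0$ for all large $N$, giving $\Delta_1\le\nu$ and hence $\Delta_1=0$ as $\nu\to 0$. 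The main technical obstacle is proving the two sieve asymptotics uniformly at the critical level $R=N^{1/4-\varepsilon}$: because the positivity margin is only logarithmic, every error term in the contour-integral evaluation and every application of Bombieri-Vinogradov must be delivered with a uniform saving, and the Gallagher averaging must be carried out with estimates strong enough to survive the joint passage $\nu\to 0$, $k\to\infty$ with $N$.
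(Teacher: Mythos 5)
Your overall strategy is the right one --- this theorem is not proved in the present paper but cited from Goldston--Pintz--Y{\i}ld{\i}r{\i}m (2009), and their proof is exactly the weighted sum over $k$-subsets $\mathcal{H}\subset[1,H]$ with $H=\nu\log N$, the two sieve asymptotics, and Gallagher's singular-series average. However, your stated asymptotic for the off-tuple prime sum is wrong, and the error drives your endgame. For $h_0\notin\mathcal{H}$ the correct evaluation is $\sum_{N<n\le 2N}\theta(n+h_0)\Lambda_R(n;\mathcal{H},k+\ell)^2\sim\frac{1}{(k+2\ell)!}\binom{2\ell}{\ell}\mathfrak{S}(\mathcal{H}\cup\{h_0\})\,N(\log R)^{k+2\ell}$, i.e.\ with $(k+2\ell)!$ and exponent $k+2\ell$; your version keeps $(k+2\ell+1)!$ and $(\log R)^{k+2\ell+1}$, which inflates each off-tuple term by a factor $\log R/(k+2\ell+1)$. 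This is not a harmless slip: with your formula the second bracket term would be $\frac{(H-k)\log R}{k+2\ell+1}$, so taking $k$ fixed and $H=Ck$ with a large constant $C$ would already make $S(N)>0$, i.e.\ it would yield \emph{bounded} gaps between primes from the Bombieri--Vinogradov level $\vartheta=1/2$ alone --- which is false, and is precisely the obstruction that made Zhang's theorem necessary. The correct off-tuple contribution, after Gallagher averaging, is only $\sim H=\nu\log N$, independent of $k$.

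Because of this, your parameter choice $k\asymp\nu\log N/(\log\log N)^2$ growing with $N$ is both unjustified and unnecessary. Unjustified, since the two propositions and Gallagher's theorem are available (at this level of the argument) only for fixed $k$ as $N\to\infty$; uniformity in $k$ tending to infinity with $N$ is exactly the substantial extra work of Goldston--Pintz--Y{\i}ld{\i}r{\i}m II (Theorem~\ref{th:D} of this paper), not something you may assume for free, and for such $k$ the singular-series average also loses the uniformity you invoke. Unnecessary, because with the corrected asymptotic the positivity condition reads $\bigl(4-\delta_k\bigr)\bigl(\tfrac14-\varepsilon\bigr)\log N+\nu\log N>\log 3N$ with $\delta_k\to0$ as $k\to\infty$ (for $\ell\sim\sqrt{k}$), so for each fixed $\nu>0$ one simply fixes $k=k(\nu)$ large and $\varepsilon=\varepsilon(\nu)$ small, lets $N\to\infty$ to get $\Delta_1\le\nu$, and then lets $\nu\to0$; no joint passage of $k,\nu,N$ is needed. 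Repair the off-tuple asymptotic and replace your final optimization by this fixed-$k$ argument and the proof matches the cited one.
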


Soon after it we improved this to

\begin{theorem}[{\cite[2010]{GPY2010}}]
\label{th:D}
We have
\beq
\label{eq:2.5}
\liminf_{n \to \infty} \frac{d_n}{(\log n)^{1/2}(\log\log n)^2} < \infty.
\eeq
\end{theorem}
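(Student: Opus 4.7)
The plan is to run the Goldston--Pintz--Yıldırım sieve that proves Theorem~\ref{th:C}, but allow the tuple length $k = k(N)$ to grow with $N$ and use an admissible $k$-tuple of essentially minimal diameter. For each large $N$ fix an admissible $k$-tuple $\mathcal{H} = \{h_1, \ldots, h_k\} \subset [0, H]$ with $H = k \log k + O(k)$, whose existence is guaranteed by the Hensley--Richards construction. Choose $R = N^{1/4 - \eta}$ for a small fixed $\eta > 0$, and consider the truncated divisor-sum weights $\Lambda_R(n; \mathcal{H}, \ell)$ from \cite{GPY2009} with an auxiliary parameter $\ell$ to be optimized.

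Form the standard pivot
$$
S(N) = \sum_{N < n \leq 2N}\Bigl(\sum_{i=1}^{k} \theta(n + h_i) - \log 3N\Bigr)\, \Lambda_R(n; \mathcal{H}, \ell)^{2},
$$
with $\theta(m) = \log m$ if $m$ is prime and zero otherwise. Positivity of $S(N)$ forces some $n \in (N, 2N]$ to have at least two primes among $n + h_1, \ldots, n + h_k$, hence a prime gap $d_m \leq H$ with $p_m \asymp N$. Using only Bombieri--Vinogradov (level $\vartheta = 1/2$), the asymptotic analysis of $S(N)$ familiar from \cite{GPY2009} reduces positivity to the usual GPY sieve inequality which, for the optimizing choice $\ell \asymp \sqrt{k}$, holds comfortably for each fixed $k$; the entire question is how fast $k$ may grow with $N$.

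To obtain the quantitative bound one tracks the error in the evaluation of the two terms of the pivot as an explicit function of $k$ and $\ell$. The local factors of the singular series $\mathfrak{S}(\mathcal{H})$, the boundary contributions from truncating the divisor sum at $R$, and the Bombieri--Vinogradov error each contribute losses that are multiplicative of size $1 + O(F(k, \ell)/\log N)$. Balancing these against the shrinking margin in the sieve inequality permits the range $k \ll (\log N)^{1/2}/(\log\log N)^{c}$ for some explicit $c$, whence $H \sim k \log k \ll (\log N)^{1/2}(\log\log N)^{2}$, which is the claim.

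The main obstacle is the uniformity of this error analysis. In the qualitative statement $\Delta_1 = 0$ one can absorb any $(1+o(1))$-factor without recording its dependence on $k$, but here that dependence is decisive: once $k$ is allowed to tend to infinity, the surplus in the positivity inequality is only of relative size $\asymp 1/k$, so an error term of comparable size would destroy the saving. Pushing the singular-series, truncation, and Bombieri--Vinogradov estimates through with uniform control in $k$ and $\ell$, and verifying the resulting constraints, is the technical heart of the argument and is what produces the peculiar $(\log \log n)^{2}$ factor in the bound.
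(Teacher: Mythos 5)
Your proposal breaks down at the very first step. With level of distribution $\vartheta = 1/2$, the pivot
\[
S(N) = \sum_{N < n \le 2N}\Bigl(\sum_{i=1}^{k} \theta(n+h_i) - \log 3N\Bigr)\Lambda_R(n;\mathcal H,\ell)^2
\]
is \emph{never} asymptotically positive, for any choice of $k$, $\ell$, and admissible $\mathcal H$. The GPY evaluation gives a ratio of the positive to the negative main term of the shape
\[
\frac{2k(2\ell+1)}{(\ell+1)(k+2\ell+1)}\cdot\frac{\vartheta}{2},
\]
which for $\vartheta=1/2$ and the optimal $\ell\asymp\sqrt k$ approaches $1$ from below, with a deficit of order $1/\sqrt k$. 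It is precisely this failure that makes bounded gaps genuinely hard; if the inequality ``held comfortably for each fixed $k$'' under Bombieri--Vinogradov, as you assert, GPY would have proved bounded gaps outright in 2005 and Zhang's theorem would have been superfluous. So the claimed positivity, and with it the asserted existence of two primes in $n+\mathcal H$, does not follow from your setup.

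The missing idea is the additional averaging that produces the theorem. One replaces the inner sum $\sum_{i=1}^k\theta(n+h_i)$ by $\sum_{1\le h_0\le H}\theta(n+h_0)$, ranging over all shifts up to $H$, not only those in $\mathcal H$. The contribution of the $h_0\notin\mathcal H$ is evaluated via the average of the singular series $\mathfrak S(\mathcal H\cup\{h_0\})$ over $h_0\le H$ (Gallagher's theorem, and its uniform generalization which appears in the present paper as Lemma~\ref{lem:4}). This supplies an extra positive contribution of relative size $\asymp H/\log N$, which is what overcomes the $1/\sqrt k$ deficit. Positivity then holds once $H/\log N \gg 1/\sqrt k$, and the quantitative statement \eqref{eq:2.5} results from letting $k\to\infty$ with $N$ while controlling, uniformly in $k$, the singular-series average and the sieve error terms; the admissibility requirement $H\gg k\log k$ and the error analysis together produce the $(\log\log n)^2$ factor. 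Your error-tracking discussion is in the right spirit, but without the $h_0$-averaging there is no positivity margin to protect, and the argument does not start.
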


Finally a few years ago I improved the exponent to $3/7$ and announced the result at the Journ\'ees Arithm\'etiques, Vilnius, 2011, and Tur\'an Memorial Conference, Budapest, 2011.

\begin{theorem}[{\cite[2013]{Pin2013}}]
\label{th:E}
\beq
\label{eq:2.6}
\liminf_{n \to \infty} \frac{d_n}{(\log n)^{3/7}(\log\log n)^{4/7}} < \infty.
\eeq
\end{theorem}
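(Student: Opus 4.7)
The plan is to apply an enhanced version of the GPY sieve method, using only the Bombieri--Vinogradov Theorem (so $R \le N^{1/4-\varepsilon}$). For an admissible $k$-tuple $\mathcal{H}=\{h_1,\ldots,h_k\}$ of diameter $H$, I would form
\[
S(N,\mathcal{H}) = \sum_{N \le n < 2N}\Bigl(\sum_{i=1}^k \mathbf{1}_{\mathcal P}(n+h_i) - 1\Bigr)\Lambda_R(n;\mathcal{H})^2,
\]
where $\Lambda_R(\cdot;\mathcal{H})$ is a truncated divisor sum weight attached to $\mathcal{H}$. Positivity of $S(N,\mathcal{H})$ forces some $n$ to carry at least two primes among the $n+h_i$, yielding $d_m \le H$ for some prime $p_m \in [N,2N+H]$.

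First I would take $\Lambda_R$ with a test polynomial $P$ more general than the classical GPY monomial $(\log R/d)^{k+\ell}$. Evaluating the two main terms (the ``prime term'' and the ``trivial term'') produces an inequality of the shape
\[
H \;\ge\; \frac{k\log k}{\mathcal{F}(P,k)}\bigl(1+o(1)\bigr)
\]
for an explicit functional $\mathcal{F}$, which one then optimizes over $P$. The gain over Theorem~\ref{th:D} comes from the fact that the optimal $P$ in this regime is not a pure monomial: correcting it by a $k$-dependent factor trades a power of $\log\log N$ for a slightly smaller power of $\log N$.

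Balancing the admissibility constraint $H \gtrsim k\log k$ with the sieve inequality and taking $k \sim (\log N)^{3/7}(\log\log N)^{-3/7}$ gives $k\log k \sim (\log N)^{3/7}(\log\log N)^{4/7}$, matching \eqref{eq:2.6}. Producing an explicit admissible tuple of this size is routine via a Gallagher-type construction over primes larger than $k$, so the quantitative sieve inequality, not the combinatorics of the tuple, drives the final exponent.

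The main obstacle will be verifying the main-term asymptotics uniformly in the joint regime $k,N \to \infty$. The contour integrals producing the GPY main term involve binomial-type factors whose behavior has to be controlled as $k$ itself grows with $N$, and the Bombieri--Vinogradov error must remain smaller than the main term by a factor $(\log N)^{-A}$ uniformly in $\mathcal{H}$. This forces a delicate analysis of the optimal $P$ as a function of $k$, and it is precisely at this step that the $3/7$-versus-$1/2$ improvement must be realized. Once uniformity is secured, substitution of the optimal $k$ and $P$ yields \eqref{eq:2.6}.
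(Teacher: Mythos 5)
Note first that this paper does not actually prove Theorem~\ref{th:E}: it is quoted from \cite{Pin2013}, so there is no in-paper argument to compare with. Judged on its own terms, your proposal has the right numerology at the end but the engine you describe fails at the very first step. With the detector $\sum_{i=1}^{k}\mathbf{1}_{\mathcal P}(n+h_i)-1$ and only the Bombieri--Vinogradov level $\vartheta=1/2$ (so $R\le N^{1/4-\varepsilon}$), the sum $S(N,\mathcal H)$ cannot be made positive for any choice of $k$, of the tuple, or of the test polynomial $P$: the main-term ratio stays strictly below the threshold, with a deficit that decays only like a negative power of $k$ but never reaches zero. This is exactly the barrier that makes Theorem~\ref{th:B} conditional on $\vartheta>1/2$ and that Zhang had to circumvent; the limitation holds for the whole class of weights you allow (this is the content of the optimality analysis in \cite{FPR2013}), so ``optimizing $P$'' cannot, by itself, force two primes among the $n+h_i$. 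Hence the claimed inequality $H\ge k\log k/\mathcal F(P,k)$, in which $\log N$ does not appear, is not the inequality the method produces.

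The missing idea is to detect primes in a full window rather than only at the tuple positions: one adds $\sum_{1\le h\le H}\theta(n+h)$ (all shifts $h\le H$, not just $h\in\mathcal H$) to the detector, so that positivity yields two primes in an interval of length $H$ and hence $d_m\le H$. The terms with $h\notin\mathcal H$ contribute, after summation, roughly $H/\log N$ times the trivial term, and evaluating this requires an average of the singular series $\mathfrak S(\mathcal H\cup\{h\})/\mathfrak S(\mathcal H)$ over $h\le H$ that is \emph{uniform in $k$ as $k\to\infty$ with $N$} --- precisely the extension of Gallagher's theorem recorded here as Lemma~\ref{lem:4}; the classical form is not sufficient in this regime. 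The correct balance is then $H/\log N\gtrsim\mathrm{deficit}(k,P)$ together with the admissibility constraint $H\gg k\log k$, and the role of the optimized weight is to lower the order of the deficit in $k$ (from the $k^{-1/2}$ of the pure monomial, through the GPY--\cite{GPY2010} improvement giving exponent $1/2$, down to the order corresponding to $3/7$, which \cite{FPR2013} shows is the limit of the method). With that repair your choice $k\asymp(\log N)^{3/7}(\log\log N)^{-3/7}$ and $H\asymp k\log k$ does reproduce \eqref{eq:2.6}, and your concern about uniformity of the main terms and of the BV error in $k$ is indeed where the technical work lies; but as written, the positivity step on which everything rests would fail.
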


In a joint work with B. Farkas and Sz. Gy. R\'ev\'esz \cite[2013]{FPR2013}
we also showed that essential new ideas are necessary to improve \eqref{eq:2.6}.

\subsection{Polignac numbers}
\label{sec:2.2}

The Twin Prime Conjecture appeared already in a more general form in 1849 in a work of de Polignac.

First we give two definitions.

\begin{definition}
\label{def:1}
A positive even number $2k$ is a strong Polignac number, or briefly a Polignac number if $d_n = 2k$ for infinitely many values of~$n$.
\end{definition}

\begin{definition}
\label{def:2}
A positive even number $2k$ is a weak Polignac number if it can be written as the difference of two primes in an infinitude of ways.
\end{definition}

The set of (strong) Polignac numbers will be denoted by $\mathcal D_s$, the set of weak Polignac numbers by $\mathcal D_w$.
(We have trivially $\mathcal D_s \subseteqq \mathcal D_w$.)

\begin{PC}[{\cite[1849]{Pol1849}}]
Every positive even integer is a (strong) Polignac number.
\end{PC}

Since the smallest weak Polignac number has to be a strong Polignac number, an easy consideration gives that using $|A|$ (or sometimes $\#A$) to denote the number of elements of a set $A$, the following proposition is true.

\begin{proposition}
The following three statements are equivalent:
\begin{itemize}
\item[{\rm (i)}]
the Bounded Gap Conjecture is true;
\item[{\rm (ii)}]
there is at least one (strong) Polignac number, i.e.\ $|\mathcal D_s| \geq 1$;
\item[{\rm (iii)}]
there is at least one weak Polignac number, i.e.\ $|\mathcal D_w| \geq 1$.
\end{itemize}
\end{proposition}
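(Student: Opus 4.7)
The inclusion $\mathcal D_s\subseteq\mathcal D_w$ makes (ii)$\Rightarrow$(iii) immediate, so my plan is to prove three further implications: the two easy ones (ii)$\Rightarrow$(i) and (i)$\Rightarrow$(ii), and the key implication (iii)$\Rightarrow$(ii) that the minimum of $\mathcal D_w$ already lies in $\mathcal D_s$. Together with the trivial one, these close the cycle of equivalences.

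For (ii)$\Rightarrow$(i) I would simply unpack the definitions: a strong Polignac number $2k$ gives $d_n=2k$ for infinitely many $n$, hence $\liminf_n d_n\le 2k<\infty$. For (i)$\Rightarrow$(ii) I would use pigeonhole: if $\liminf_n d_n\le M<\infty$, then $d_n\le M$ holds for infinitely many $n\ge 2$, and the parity observation that $p_n$ and $p_{n+1}$ are both odd for $n\ge 2$ confines these values to the finite set $\{2,4,\dots,2\lfloor M/2\rfloor\}$ of even positive integers. Consequently some single value $2k$ is attained infinitely often, which gives $2k\in\mathcal D_s$.

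The interesting step is (iii)$\Rightarrow$(ii), which is the assertion flagged in the excerpt that the smallest weak Polignac number must already be strong. Here I would set $2k:=\min\mathcal D_w$ (it exists by well-ordering of the positive integers) and argue by contradiction. Assume $2k\notin\mathcal D_s$, so $d_n=2k$ for only finitely many $n$. By the definition of $\mathcal D_w$ there are infinitely many primes $p$ with $p+2k$ also prime; writing $p=p_n$ and discarding finitely many, the two primes cannot be consecutive, so there exists a prime $q$ with $p<q<p+2k$. Writing $q-p=2j$, we have $j\in\{1,\dots,k-1\}$, and pigeonhole on this finite index set produces a fixed $j_0<k$ realised by infinitely many such pairs $(p,q)$. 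This places $2j_0$ in $\mathcal D_w$ strictly below $2k$, contradicting the minimality of $2k$; hence the assumption fails and $2k\in\mathcal D_s$.

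The argument is really just bookkeeping, so I do not expect a genuine obstacle. The one point I would watch carefully is the preservation of the quantifier ``for infinitely many'' across the pigeonhole step in (iii)$\Rightarrow$(ii): the dichotomy ``$d_n=2k$ occurs infinitely often versus only finitely often'' has to be formulated so that after throwing away a finite set of prime pairs, infinitely many remain to distribute among the $k-1$ possible values of $j$.
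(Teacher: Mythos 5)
Your proof is correct and follows exactly the route the paper sketches: the paper's one-line hint is precisely that the smallest weak Polignac number must already be strong, which is your (iii)$\Rightarrow$(ii) argument, and the remaining implications are the same easy unpacking of definitions plus pigeonhole on the even values $\le M$.
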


The above very simple proposition shows that the Bounded Gap Conjecture itself leaves still many problems open about Polignac numbers or weak Polignac numbers, but solves the crucial problem that their set is nonempty.
We will prove several unconditional results about the density and distribution of (strong) Polignac numbers.

\renewcommand{\thetheorem}{\arabic{theorem}}
\setcounter{theorem}{0}
\begin{theorem}
\label{th:1}
There exists an explicitly calculable constant $c$ such that for $N > N_0$ we have at least $cN$ Polignac numbers below $N$, i.e.\ Polignac numbers have a positive lower asymptotic density.
\end{theorem}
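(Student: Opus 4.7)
The plan is to combine Zhang's theorem with a simple counting over admissible $k_0$-tuples. Fix $k_0$ large enough for the GPY--Zhang method to produce, from any admissible $k_0$-tuple, at least two primes among $n+h_1, \ldots, n+h_{k_0}$ for infinitely many $n$. I claim that this yields the following combinatorial assertion: for every admissible $k_0$-tuple $\mathcal{H} = \{h_1 < \cdots < h_{k_0}\} \subset [1, H]$, with $H$ arbitrary, some difference $h_j - h_i$ belongs to $\mathcal{D}_s$.

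To see this, fix a modulus $W = \prod_{p \leq w} p$ with $w$ depending only on $k_0$, and choose a residue class $v \pmod{W}$ such that $v + h$ is coprime to $W$ precisely when $h \in \mathcal{H}$. The GPY--Zhang sieve, fed with Zhang's distribution estimate on smooth moduli, gives infinitely many $n \equiv v \pmod{W}$ at which at least two of $n + h_1, \ldots, n + h_{k_0}$ are prime. The choice of $v$ forces every prime in $(n, n + h_{k_0}]$ to lie in $n + \mathcal{H}$; thus any pair of primes produced in $n + \mathcal{H}$ is automatically a pair of \emph{consecutive} primes. Pigeonholing over the at most $2^{k_0}$ subsets of $\mathcal{H}$ that can be the set of prime positions, some fixed $S \subseteq \mathcal{H}$ is attained for infinitely many $n$, and every minimal gap inside $S$ then arises as $d_m = p_{m+1} - p_m$ infinitely often. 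Each such gap is an element of $\mathcal{D}_s$ of size at most $H$.

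The density lower bound now follows from counting. Let $D(H) := |\mathcal{D}_s \cap [1, H]|$. A standard sieve computation shows that the number of admissible $k_0$-subsets of $[1, H]$ is at least $c_1(k_0) \binom{H}{k_0}$ for some $c_1(k_0) > 0$, since admissibility imposes restrictions only modulo primes $p \leq k_0$ and for each such $p$ a positive proportion of $k_0$-subsets misses some residue class. On the other hand, any fixed $d \in [1, H]$ appears as a difference $h_j - h_i$ in at most $(H-d)\binom{H-2}{k_0-2} \leq c_2(k_0) H^{k_0 - 1}$ such subsets. Assigning each admissible tuple to one of its Polignac differences yields
\[
c_1(k_0) \binom{H}{k_0} \;\leq\; D(H) \cdot c_2(k_0) H^{k_0 - 1},
\]
and taking $H = N$ gives $D(N) \gg_{k_0} N$, the desired positive lower density.

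The main obstacle is the first step --- upgrading Zhang's "two primes in $n + \mathcal{H}$ infinitely often" to "two \emph{consecutive} primes infinitely often," which is exactly what converts a weak Polignac number into a strong one. This relies on the $W$-trick together with the fact that Zhang's distribution estimate remains uniform in the residue class modulo a fixed $W$. The subsequent counting step is elementary, but it is crucial that Zhang's result applies \emph{uniformly} to every admissible $k_0$-tuple, with $k_0$ and $W$ fixed as $H \to \infty$, so that a single choice of $k_0$ serves for all $H$.
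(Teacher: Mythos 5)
Your counting step is fine and is in the same spirit as the paper's (the paper quotes Corollary~1 of \cite{Pin2010a}, which turns ``every admissible $k$-tuple contains a Polignac difference'' into the density $\frac{1}{k(k-1)}\prod_{p\le k}(1-\frac1p)$), but the first step of your argument --- the upgrade from ``two primes in $n+\mathcal H$'' to ``two \emph{consecutive} primes'' --- does not work as you describe, and this is exactly the hard point of the theorem. You fix $W=\prod_{p\le w}p$ with $w$ depending only on $k_0$ and claim a residue class $v \pmod W$ exists with $v+h$ coprime to $W$ \emph{precisely} for $h\in\mathcal H$, so that every prime in $(n,n+h_{k_0}]$ must sit in $n+\mathcal H$. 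Such a $v$ cannot exist once the diameter of the tuple exceeds a bounded multiple of $W$: in any window of length $L$ the number of $h$ with $\gcd(v+h,W)=1$ is about $L\,\varphi(W)/W$, which tends to infinity with $L$, whereas $|\mathcal H|=k_0$ is fixed. In your counting argument the tuples live in $[1,H]$ with $H=N\to\infty$ and their relevant differences are of size up to $N$, so for all but finitely many of your tuples there are unboundedly many positions $h\notin\mathcal H$ in the window where $n+h$ is coprime to $W$ and may well be prime; your pigeonhole then only shows that some $h_j-h_i$ is a difference of two primes, i.e.\ a \emph{weak} Polignac number, not a value of $d_n$. (Forcing compositeness at all intermediate positions by congruence conditions would require covering the whole window by residue classes to prime moduli, i.e.\ a modulus growing superexponentially in the window length, and then neither the uniformity of Zhang's estimate in the fixed class $v\pmod W$ nor your ``$k_0$ and $W$ fixed as $H\to\infty$'' requirement survives.)

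The paper circumvents this differently, and this is the content of its Main Theorem (Conjecture DHL$^*(k,2)$ for $k\ge 3.5\times 10^6$): the tuple is confined to $[0,\varepsilon\log N]$, and instead of forcing the intermediate positions to be composite, one \emph{counts} the exceptional $n$. By the Selberg sieve upper bound (Lemma~3) the number of $n\in[N,2N)$ with $P^-(P_{\mathcal H}(n))>N^{c_1(k)}$ and $n+h\in\mathcal P$ for some $h\notin\mathcal H$ between $h_i$ and $h_j$ is
\[
\ll_k \frac{N}{\log^{k+1}N}\sum_{h_i<h<h_j}\mathfrak S(\mathcal H\cup h)\ \ll\ \varepsilon\,\mathfrak S(\mathcal H)\frac{N}{\log^{k}N},
\]
using the Gallagher-type average of singular series (Lemma~4); for $\varepsilon<\varepsilon_0(k)$ this is negligible against the lower bound $c_2(k)\mathfrak S(\mathcal H)N/\log^k N$ for the $n$ producing two primes in $n+\mathcal H$, so a positive proportion of those $n$ give \emph{consecutive} primes $n+h_i$, $n+h_j$. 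Note also that the confinement $\mathcal H\subset[0,\varepsilon\log N]$ is no obstacle to the density statement: to locate Polignac numbers below a given $N$ one applies the Main Theorem with a parameter $N'$ so large that $N\le\varepsilon\log N'$, since only the infinitude of the relevant $n$ matters. To repair your write-up you would have to replace the fixed-$W$ trick by this sieve-theoretic exclusion of intermediate primes (or some equivalent device), which requires precisely the extra ingredients (Lemmas~1--4 and the uniformity in $H\ll\log N$) that the paper assembles.
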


\begin{theorem}
\label{th:2}
There exists an ineffective constant $C'$ such that every interval of type $[M, M + C']$ contains at least one Polignac number.
\end{theorem}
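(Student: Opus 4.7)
The strategy is to apply Zhang's theorem (Theorem~A) to a family of admissible tuples localized near each target value $M$, and to extract a strong Polignac number in a bounded neighborhood of $M$ by a finiteness argument that accounts for the ineffectivity of $C'$.

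The starting point is the observation, derivable from Theorem~A together with a refinement of the GPY sieve weights, that for every admissible $k$-tuple $\mathcal{H}$ with $k \geq k_0$ (the constant from Theorem~A), at least one difference in $\mathcal{H} - \mathcal{H}$ is a \emph{strong} Polignac number. Indeed, Zhang's method produces infinitely many $n$ with two primes in $n + \mathcal{H}$; with additional care, using a GPY-type weighted sieve that suppresses the contribution of $n$ with extra primes in $[n+h_1, n+h_k] \setminus (n + \mathcal{H})$, one ensures the two primes are consecutive primes. The pigeonhole principle applied to the finite set $\mathcal{H} - \mathcal{H}$ then yields a specific difference $h_j - h_i$ realized as $d_n = p_{n+1} - p_n$ infinitely often.

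For each $M$, I construct a two-cluster admissible tuple
\[
\mathcal{H}_M = \mathcal{H}_0 \cup (t_M + \mathcal{H}_0),
\]
where $\mathcal{H}_0$ is a fixed admissible $k_0$-tuple in $[0, H_0]$ and $t_M$ lies in a bounded window $[M, M + D]$, with $D$ determined by the Chinese Remainder Theorem applied to the primes $p \leq 2k_0$ so that $\mathcal{H}_M$ remains admissible. The difference set $\mathcal{H}_M - \mathcal{H}_M$ decomposes into within-cluster differences (confined to the $M$-independent interval $[-H_0, H_0]$) and cross-cluster differences (confined to a neighborhood of $\pm t_M$, hence of $\pm M$). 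The preceding paragraph supplies a strong Polignac number in one of these two regions.

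To force the Polignac number into the cross-cluster region, I invoke a finiteness argument: the set $\mathcal{D}_s \cap [0, H_0]$ is a fixed finite set, independent of $M$. By varying the cluster shape $\mathcal{H}_0$ over a suitable finite family, or by iterating the construction with enlarged clusters so that each prescribed small candidate can be excluded, one shows that for $M$ exceeding a threshold depending on the unknown set $\mathcal{D}_s \cap [0, H_0]$, the detected Polignac number must be a cross-cluster difference, hence must lie in $[M - H_0 - D, M + H_0 + D]$. Setting $C' := 2(H_0 + D)$ and handling the finitely many small $M$ trivially completes the argument. The main obstacle is precisely this ``which difference'' problem --- Zhang locates a Polignac value in $\mathcal{H}_M - \mathcal{H}_M$ but not which one --- together with the consecutive-primes refinement of Zhang's two-primes-in-a-tuple conclusion. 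The ineffectivity in $C'$ is inherent to this approach, since one cannot effectively identify the small Polignac numbers below $H_0$ used to calibrate the threshold for $M$.
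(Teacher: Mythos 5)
Your first step is sound and is exactly the paper's Main Theorem (Conjecture DHL$^*(k,2)$ for $k\ge 3.5\cdot 10^6$): for every admissible $k$-tuple, some difference $h_j-h_i$ is realized as $d_n$ infinitely often, i.e.\ is a \emph{strong} Polignac number. The gap is in the localization step. When you apply this to the two-cluster tuple $\mathcal H_M=\mathcal H_0\cup(t_M+\mathcal H_0)$, the conclusion may always be witnessed by a \emph{within-cluster} pair, and nothing in your argument prevents this. Indeed, if each cluster has $\ge k_0$ elements, then DHL$^*$ applied to $\mathcal H_0$ alone already guarantees that some within-cluster difference in $[0,H_0]$ \emph{is} a strong Polignac number, so the within-cluster region can never be ``excluded''; and if the clusters are smaller, you have no way to certify that their difference set avoids $\mathcal D_s$. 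Knowing the finite set $\mathcal D_s\cap[0,H_0]$ does not help: its elements are genuine Polignac numbers, and the sieve conclusion does not let you choose which pair of components becomes consecutive primes, so ``varying the cluster shape'' or ``iterating with enlarged clusters'' (which only adds more small differences) gives no information about the cross-cluster differences near $M$. As stated, the proposal proves nothing about intervals around a prescribed $M$.

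The paper resolves precisely this ``which difference'' problem by an indirect argument. Assume the theorem fails; then there are Polignac-free intervals $I_\nu=[M_\nu,M_\nu+C_\nu]$ with $M_\nu>C_\nu>4M_{\nu-1}$, for arbitrarily large $C_0<M_1$. Choose an admissible $k$-tuple with one element $h_\nu$ in the upper half $[M_\nu+C_\nu/2,\,M_\nu+C_\nu]$ of each $I_\nu$ (admissibility is easy since the intervals are long and $\nu_p(\mathcal H_k)<p$ is automatic for $p>k$). The rapid growth forces \emph{every} pairwise difference $h_\mu-h_\nu$ ($\nu<\mu$) to lie in $I_\mu$, so whichever difference the Main Theorem certifies as a strong Polignac number, it lands in a forbidden interval --- a contradiction. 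This is where the ineffectivity of $C'$ comes from. If you want to salvage your two-cluster picture, you would have to build the cluster $\mathcal H_0$ itself with all differences inside assumed Polignac-free intervals, i.e.\ you are led back to this nested-interval contradiction; the finiteness argument you describe cannot replace it.
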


\begin{remark}
As mentioned earlier, the term Polignac number means always strong Polignac numbers.
\end{remark}

\subsection{The normalized value distribution of $d_n$}
\label{sec:2.3}

The Prime Number Theorem implies
\beq
\label{eq:2.7}
\lim_{N \to \infty} \frac1N \sum \frac{d_n}{\log n} = 1,
\eeq
so it is natural to investigate the series $d_n/\log n$.
Denoting by $J$ the set of limit points of $d_n / \log n$, Erd\H{o}s \cite[1955]{Erd1955} conjectured
\beq
\label{eq:2.8}
J = \left\{ \frac{d_n}{\log n} \right\}' = [0, \infty].
\eeq

While Westzynthius \cite[1931]{Wes1931} proved more than 80 years ago that $\infty \in J$, no finite limit point was known until 2005 when we showed the Small Gap Conjecture, i.e.\ Theorem~\ref{th:C} \cite[2009]{GPY2009}, which is equivalent to $0 \in J$.

Interestingly enough Erd\H{o}s \cite[1955]{Erd1955} and Ricci \cite[1954]{Ric1954} proved independently about 60 years ago that $J$ has a positive Lebesgue measure.
What I can show is a weaker form of Erd\H{o}s's conjecture \eqref{eq:2.8}.

\begin{theorem}
\label{th:3}
There is an ineffective constant $c > 0$ such that
\beq
\label{eq:2.9}
[0, c] \subset J.
\eeq
\end{theorem}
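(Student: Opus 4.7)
The plan is to prove a pointwise statement: for some absolute $c>0$ and every $\alpha \in [0,c]$, there are infinitely many $n$ with $d_n/\log p_n$ arbitrarily close to $\alpha$. The endpoint $\alpha = 0$ is immediate from Theorem~C, so the real content is producing, for each $\alpha \in (0,c]$, consecutive prime pairs $p_{n_k}<p_{n_k+1}$ with $d_{n_k}/\log p_{n_k} \to \alpha$.

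My approach is a scale-matched version of the Zhang--GPY sieve argument that already underlies Theorems~1 and~2. Fix $\alpha \in (0,c]$ and a large parameter $N$. I would choose an admissible $k_0$-tuple $\mathcal{H}_N \subset [0,\lfloor \alpha \log N \rfloor]$, where $k_0$ is Zhang's threshold; such tuples exist once $\alpha \log N$ exceeds the minimum diameter of an admissible $k_0$-tuple, i.e.\ for all sufficiently large $N$. I would then apply the Zhang--GPY weighted sum over $n \in [N,2N]$ to detect those $n$ for which $n + \mathcal{H}_N$ contains at least two primes. By pigeonhole over the $\binom{k_0}{2}$ index pairs, one pair $(h_i,h_j)$ is realised by two primes $n+h_i,\, n+h_j$ for a positive proportion of $n \in [N,2N]$, and among the available pairs one can force $h_j - h_i$ to lie within $O(1)$ of $\alpha \log N$ by populating $\mathcal{H}_N$ so that some difference hits this target. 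An additional inclusion--exclusion sieve of the type used to promote weak Polignac numbers to strong ones in Theorem~2 would then remove the negligible contribution of $n$ for which a further prime lies strictly between $n+h_i$ and $n+h_j$, leaving a positive density of $n \in [N,2N]$ for which $n+h_i$ and $n+h_j$ are \emph{consecutive} primes. This produces $m = m(N)$ with $p_m \asymp N$ and $d_m = \alpha \log N + O(1)$, hence $d_m/\log p_m = \alpha + O(1/\log N) \to \alpha$ as $N \to \infty$, showing $\alpha \in J$.

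The main obstacle will be verifying that the Zhang--GPY sieve analysis remains valid \emph{uniformly} in admissible tuples whose diameter grows with $N$ up to order $c \log N$, rather than being bounded as in Theorems~A,~1 and~2. The constant $c$ is forced by the largest such dilation rate for which the sieve main term still dominates its error, and the ineffectivity of $c$ is to be expected: it is inherited both from the ineffective constant $C'$ of Theorem~2 (used in the pigeonhole/target-difference step) and from any Siegel-type contributions to the level-of-distribution estimates for smooth moduli that drive the Bombieri--Vinogradov--Zhang regime. A secondary technicality, routine but worth flagging, is ensuring that $\mathcal{H}_N$ can be chosen admissible \emph{and} containing a difference close to $\alpha \log N$ simultaneously, which should follow from a standard greedy construction avoiding residues modulo primes up to $k_0$.
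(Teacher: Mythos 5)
Your proposal has a genuine gap at its core. You write that ``by pigeonhole over the $\binom{k_0}{2}$ index pairs, one pair $(h_i,h_j)$ is realised by two primes for a positive proportion of $n$,'' and then that ``one can force $h_j - h_i$ to lie within $O(1)$ of $\alpha\log N$ by populating $\mathcal H_N$.'' These two steps do not compose: the pigeonhole gives you \emph{some} pair $(h_i,h_j)$, but you have no control over \emph{which} pair the sieve selects. You may populate $\mathcal H_N$ so that one specific difference is near $\alpha\log N$, but DHL$^*(k,2)$ may well return a different pair whose gap is, say, $O(1)$. Since $\mathcal H_N \subset [0,\alpha\log N]$ forces all pairwise gaps to be $\leq \alpha\log N$, and since for $k\geq 3$ a tuple cannot have all its pairwise gaps simultaneously near one prescribed value (as $(h_3-h_1)=(h_3-h_2)+(h_2-h_1)$), a direct hit on a target gap $\alpha\log N$ is out of reach of this method. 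This is precisely why the paper does not attempt a direct, pointwise argument.

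The paper's proof of Theorems~3--4 is instead by contradiction, with a hierarchical construction that sidesteps the ``which pair?'' problem. Suppose $[0,c]\not\subset J$. Since $J$ is closed and one can iterate, one extracts a rapidly shrinking chain of intervals $J_\nu = [c_\nu, c_\nu+\delta_\nu]$ near $0$ (with $c_\nu > 4\delta_\nu > 20 c_{\nu+1}$) all disjoint from $J$. One then places a single element $h_\nu$ of an admissible $k$-tuple in each scaled window $I_\nu'(N)$. The geometric decay of the $c_\nu$ guarantees that for $\mu < \nu$ the difference $h_\mu - h_\nu$ lands inside $I_\mu(n)$, so that \emph{every} pairwise difference lies in an excluded interval; hence it does not matter which pair the Main Theorem picks out as consecutive primes --- any choice yields $d_n/f(n)$ in a forbidden region, a contradiction. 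That is the idea you would need to supply: a tuple in which all $\binom{k}{2}$ differences, not just a designated one, are pre-positioned in the target set. Your secondary remark about uniformity of the sieve for tuples of diameter up to $\varepsilon\log N$ is a real and necessary ingredient (it is exactly the content of DHL$^*(k,2)$), but it is not the missing piece; the missing piece is the contradiction/hierarchical design.
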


K\'alm\'an Gy\H{o}ry asked me at the Tur\'an Memorial Conference in Budapest, 2011 whether it is possible to find a form of the above result which gives answers about the more subtle value-distribution of $d_n$ if we use a test-function $f(n) \leq \log n$, $f(n) \to \infty$ as $n \to \infty$.

\begin{definition}
\label{def:3}
Let $\mathcal F$ denote the class of functions $f: \ \mathbb Z^+ \to R^+$ with a slow oscillation, when for every $\varepsilon > 0$ we have an $N(\varepsilon) > 0$ such that
\beq
\label{eq:2.10}
(1 - \varepsilon) f(N) \leq f(n) \leq (1 + \varepsilon) f(N) \ \text{ for } \ N \leq n \leq 2N, \quad N > N(\varepsilon).
\eeq
\end{definition}

\begin{theorem}
\label{th:4}
For every function $f \in \mathcal F$, $f(n) \leq \log n$, $\lim\limits_{n \to \infty} f(n) = \infty$ we have an ineffective constant $c_f > 0$ such that
\beq
\label{eq:2.11}
[0, c_f] \subset J_f := \left\{ \frac{d_n}{f(n)}\right\}'.
\eeq
\end{theorem}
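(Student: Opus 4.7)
The case $\alpha=0$ is immediate from Zhang's Theorem~A: infinitely many $n$ satisfy $d_n\le 7\cdot 10^7$, and since $f(n)\to\infty$, the ratio $d_n/f(n)\to 0$ along this subsequence, giving $0\in J_f$.

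For $\alpha\in(0,c_f]$ and $\varepsilon>0$ I would construct infinitely many $n$ with $|d_n/f(n)-\alpha|<\varepsilon$ by pairing Theorem~2 with the slow-oscillation hypothesis \eqref{eq:2.10}. Specifically, \eqref{eq:2.10} applied with any fixed $\varepsilon'$ yields $f(n)=(1+o(1))f(N)$ uniformly for $n\in[N,2N]$ as $N\to\infty$. Theorem~2 supplies, for each large $N$, a Polignac number $2k_N\in[\alpha f(N),\alpha f(N)+C']$, so that $2k_N/f(N)\to\alpha$. If for an infinite sequence $N_j\to\infty$ one can locate $n_j\in[N_j,2N_j]$ with $d_{n_j}=2k_{N_j}$, then
\[
\frac{d_{n_j}}{f(n_j)}=\frac{2k_{N_j}}{(1+o(1))f(N_j)}\longrightarrow\alpha,
\]
which establishes $\alpha\in J_f$.

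The main obstacle is precisely this dyadic localisation: the bare Polignac property of $2k_{N_j}$ only guarantees \emph{some} (unlocated) $n$ with $d_n=2k_{N_j}$. To overcome it, I would not treat Theorem~2 as a black box but reopen its proof and track scales. Theorem~2 is obtained by applying the Zhang--GPY machinery to a family of admissible $k$-tuples $\mathcal H\subset[0,\alpha f(N)+O(1)]$ whose pairwise differences sweep the target window $[\alpha f(N),\alpha f(N)+C']$, combined with an Erd\H os--Rankin style pre-sieve that removes intervening primes in $[n+\min\mathcal H,n+\max\mathcal H]\setminus(n+\mathcal H)$ so that any two primes in $n+\mathcal H$ are automatically consecutive in $\mathcal P$. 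Zhang's quantitative lower bound then yields $\gg N/(\log N)^{O(1)}$ admissible shifts $n\in[N,2N]$ supporting $\ge 2$ primes in $n+\mathcal H$; pigeon-holing over the $O(1)$ pairwise differences of $\mathcal H$ pins down a specific Polignac number $2k_N$ in the target window that is realised as $d_n$ for a positive proportion of $n\in[N,2N]$, supplying the required $n_j$. Feeding this into the preceding display completes the proof, with $c_f$ inheriting its value (ineffectively) from $C'$ and from the Zhang-threshold size of $\mathcal H$.
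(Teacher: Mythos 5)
Your reduction for $\alpha=0$ (Zhang plus $f\to\infty$) is correct, and your use of the slow-oscillation property to write $f(n)=(1+o(1))f(N)$ on $[N,2N]$ is the right observation. But the strategy for $\alpha>0$ has a genuine gap, and it is precisely the gap that forces the paper to argue indirectly.

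Your plan is to hit a single target window $[\alpha f(N),\,\alpha f(N)+C']$ by choosing an admissible $k$-tuple $\mathcal H\subset[0,\alpha f(N)+O(1)]$ and then pigeon-holing over the pairwise differences to produce one that is realized as $d_n$ with $n\in[N,2N)$. The trouble is that the Main Theorem (i.e.\ $\mathrm{DHL}^*(k,2)$) only guarantees that \emph{some} pair $h_i<h_j$ of the tuple yields consecutive primes $n+h_i,n+h_j$ for many $n\in[N,2N)$; it gives you no control over \emph{which} pair. A $k$-tuple with $k\ge 3.5\cdot 10^6$ elements packed into $[0,\alpha f(N)+O(1)]$ has pairwise differences ranging over the entire scale from $O(1)$ up to $\approx\alpha f(N)$, and the realized pair can have any of these differences — there is no reason it should land near the top of the range in your target window. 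Nor can you shrink the spread: if you tried to force all pairwise differences into a single short window $[(\alpha-\varepsilon)f(N),(\alpha+\varepsilon)f(N)]$, then already three elements $h_1<h_2<h_3$ would force $h_3-h_1\ge 2(\alpha-\varepsilon)f(N)$, which escapes the window. So a direct ``hit the window at $\alpha$'' argument cannot be run with a large tuple and the Main Theorem. Your fallback — reopening the proof of Theorem~2 — does not repair this, because Theorem~2 itself is proved by contradiction (one assumes a chain of Polignac-free intervals and constructs a tuple whose differences all land in them); it is not a constructive device that outputs a Polignac number in a prescribed window, let alone one realized by $d_n$ at a prescribed dyadic scale.

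The paper's actual proof of Theorem~4 (Section~5) turns exactly this obstruction to its advantage by arguing by contradiction. If $[0,c]\not\subset J_f$ for every $c>0$, then since $J_f$ is closed one can select an infinite family of avoided intervals $J_\nu=[c_\nu,c_\nu+\delta_\nu]$ with the geometric spacing $c_\nu>4\delta_\nu>20c_{\nu+1}$ approaching $0$. Scaling by $f(n)$ gives avoided windows $I_\nu(n)=[c_\nu f(n),(c_\nu+\delta_\nu)f(n)]$ for $d_n$, $n\in[N,2N)$. One then builds an admissible $k$-tuple with one element placed in (a slightly shrunk copy of) each scaled window; the geometric decay guarantees that \emph{every} pairwise difference $h_\mu-h_\nu$ ($\mu<\nu$) falls back into one of the $I_\mu(n)$. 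Now the Main Theorem's inability to tell you which pair is realized is harmless: whichever pair is realized produces a $d_n\in\bigcup I_\mu(n)$, contradicting the assumed avoidance. This is why the constant $c_f$ comes out ineffective — the argument never exhibits $n$ with $d_n/f(n)$ near a prescribed $\alpha$, it only rules out the existence of infinitely many avoided windows shrinking to $0$. Your proposal would need to be rewritten along these contrapositive lines; in its current direct form it does not go through.
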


\subsection{Comparison of two consecutive values of $d_n$}
\label{sec:2.4}

Erd\H{o}s and Tur\'an proved 65 years ago \cite[1948]{ET1948} that $d_{n + 1} - d_n$ changes sign infinitely often.
This was soon improved by Erd\H{o}s \cite[1948]{Erd1948} to
\beq
\label{eq:2.12}
\liminf_{n \to \infty} \frac{d_{n + 1}}{d_n} < 1 < \limsup_{n \to \infty} \frac{d_{n + 1}}{d_n}.
\eeq
Erd\H{o}s \cite[1955]{Erd1955} wrote seven years later: ``One would of course conjecture that
\beq
\label{eq:2.13}
\liminf_{n \to \infty} \frac{d_{n + 1}}{d_n} = 0, \quad \limsup_{n \to \infty} \frac{d_{n + 1}}{d_n} = \infty,
\eeq
but these conjectures seem very difficult to prove.''

In Section 6 I will show this in a much stronger form:

\begin{theorem}
\label{th:5}
We have
\beq
\label{eq:2.14}
\liminf_{n \to \infty} \frac{d_{n + 1} / d_n}{(\log n)^{-1}} < \infty
\eeq
and
\beq
\label{eq:2.15}
\limsup_{n \to \infty} \frac{d_{n + 1}/d_n}{\log n} > 0.
\eeq
\end{theorem}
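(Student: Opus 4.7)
The plan is to derive both parts from Zhang's Theorem~\ref{th:A} taken in quantitative form, combined with a Brun/Selberg sieve upper bound on prime triples in short intervals. The key preliminary step is to observe that Zhang's argument proves more than the existence of one bounded gap: the GPY-type weighted sum that underlies the proof of Theorem~\ref{th:A}, applied to an admissible $k$-tuple $\mathcal H=\{h_1,\ldots,h_k\}$ of diameter $\le C_0$ (with $C_0=7\cdot 10^7$ or a similar constant), gives that the number of $n\le N$ for which at least two of $n+h_1,\ldots,n+h_k$ are simultaneously prime is $\gg N/(\log N)^A$ for some explicit $A$. Each such $n$ forces a bounded gap between some pair of consecutive primes inside $[n+h_1,n+h_k]$, and each such pair arises from $O_k(1)$ values of $n$; therefore
$$L(N):=\#\{n\le N:d_n\le C_0\}\ \gg\ \frac{N}{(\log N)^A}.$$
Any $A<2$ will suffice below; optimally one expects $A=1$.

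Next, for any $H\le\log N$, the Selberg sieve applied to the admissible triples $\{0,a,a+b\}$ with $1\le a\le C_0$, $1\le b\le H$, and summed, gives
$$\#\bigl\{n\le N:d_n\le C_0,\ d_{n+1}\le H\bigr\}\ \le\ \sum_{a=1}^{C_0}\sum_{b=1}^{H}\#\bigl\{p\le N:p,\,p+a,\,p+a+b\text{ prime}\bigr\}\ \ll\ \frac{NH}{(\log N)^3},$$
since the singular series attached to $\{0,a,a+b\}$ is bounded on average over $b$ while $a$ ranges over a bounded set. Taking $H=c\log N$ with $c>0$ small makes the right-hand side $\ll N/(\log N)^2$, which is $o(L(N))$ whenever $A<2$.

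Comparing the two counts, for each sufficiently small $c>0$ there are infinitely many $n$ with $d_n\le C_0$ and $d_{n+1}>c\log n$, so $d_{n+1}/d_n>(c/C_0)\log n$; this proves \eqref{eq:2.15}. Part \eqref{eq:2.14} is symmetric: the set $\{n\le N:d_{n+1}\le C_0\}$ differs from $\{n\le N:d_n\le C_0\}$ only by a unit shift of the index and so has the same cardinality up to $O(1)$; the same triples estimate, applied with the roles of $d_n$ and $d_{n+1}$ exchanged, then produces infinitely many $n$ with $d_{n+1}\le C_0$ and $d_n>c\log n$, giving $d_{n+1}/d_n<(C_0/c)(\log n)^{-1}$.

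The main obstacle is the quantitative form of Theorem~\ref{th:A}: one must inspect the GPY--Zhang weighted sum and extract a positive-proportion count of bounded-gap indices rather than use the theorem as a black box. This is routine once the sum is written out explicitly, but it forces the argument to depend on the \emph{proof} of Theorem~\ref{th:A}, not only on its statement. Granted the quantitative lower bound, the upper bound on triples is classical and the conclusion is a direct comparison of counts.
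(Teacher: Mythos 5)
Your argument hinges on the claim that the GPY--Zhang weighted sum yields
$L(N)=\#\{n\le N: d_n\le C_0\}\gg N/(\log N)^A$ with some $A<2$. That is the genuine gap: no such bound is extractable from the method. What the weighted sum gives (and what the paper's Main Theorem, i.e.\ $\text{DHL}^*(k,2)$, states) is a count of order $\mathfrak S(\mathcal H)N/\log^k N$ with $k\ge 3.5\times 10^6$, because the positivity argument only localizes $n$ up to the normalized weight $\Lambda_R(n;\mathcal H,k+\ell)^2$, whose scale forces the exponent $k$, not $1$ or $2$. With the correct exponent your comparison collapses: your Selberg bound for triples gives $\#\{n\le N: d_n\le C_0,\ d_{n+1}\le c\log N\}\ll cN/(\log N)^2$, which is enormously \emph{larger} than $N/(\log N)^k$, so it is in no way $o(L(N))$, and the conclusion that some bounded gap is followed by a gap $>c\log n$ does not follow. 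The same defect kills the ``symmetric'' argument for \eqref{eq:2.14}.

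The paper's proof repairs exactly this mismatch by never leaving the $k$-tuple: both counts are kept at the same scale $N/\log^k N$. The Main Theorem (the strengthened $\text{DHL}^*(k,2)$, not Zhang's theorem as a black box) produces $\gg \mathfrak S(\mathcal H)N/\log^k N$ integers $n$ for which $n+h_i$, $n+h_j$ are \emph{consecutive} primes (this uses the selection of a maximal pair $(i,j)$ via the sets $B(i,j,N)$, plus Lemma~\ref{lem:3} to exclude primes at shifts $h\notin\mathcal H$ between $h_i$ and $h_j$) and, crucially, $P^-(P_{\mathcal H}(n))>n^{c_1(k)}$. If the following gap were always $\le C\varepsilon\log N$, then for each candidate difference $d$ one applies Selberg's sieve to the \emph{enlarged} $(k+1)$-tuple $\mathcal H\cup\{h\}$ -- the almost-primality of all $k$ components is what makes this bound $\ll \mathfrak S(\mathcal H\cup h)N/\log^{k+1}N$ rather than your $N/(\log N)^3$ -- and summing over $h\le C\varepsilon\log N$ with the Gallagher-type average of Lemma~\ref{lem:4} gives $\ll \varepsilon\,\mathfrak S(\mathcal H)N/\log^k N$. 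Choosing $\varepsilon$ small compared with the constant in the lower bound yields the contradiction. So the two essential ingredients missing from your proposal are (i) a lower bound at the attainable scale $N/\log^k N$ with consecutiveness and almost-primality of the whole tuple built in, and (ii) an upper bound taken relative to that same tuple (a $(k+1)$-dimensional sieve), rather than a bare triple estimate.
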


\subsection{Arithmetic progressions in the sequence of generalized twin primes}
\label{sec:2.5}

Based on the method of I. M. Vinogradov \cite[1937]{Vin1937} van der Corput \cite[1939]{VdC1939} showed the existence of infinitely many 3-term arithmetic progressions in the sequence of primes.
The problem of the existence of infinitely many $k$-term arithmetic progressions was open for all $k \geq 4$ until 2004, when B. Green and T. Tao \cite[2008]{GT2008} found their wonderful result that primes contain $k$-term arithmetic progressions for every~$k$.
I found recently \cite[2010]{Pin2010a}
a common generalization of Green--Tao's result and our Theorem B \cite[2010]{GPY2010} under the deep assumption that there is a level $\vartheta > 1/2$ of the distribution of primes.
Combining Zhang's method with that of \cite[2010]{Pin2010a} I can prove now the following

\begin{theorem}
\label{th:6}
There is a $d \leq 7 \times 10^7$ such that there are arbitrarily long arithmetic progressions of primes with the property that $p' = p + d$ is the prime following $p$ for each element of the progression.
\end{theorem}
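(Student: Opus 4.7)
The plan is to combine Zhang's method for producing primes in admissible tuples with the Green--Tao transference principle, following the conditional strategy I employed in \cite{Pin2010a} under the unproved hypothesis $\vartheta>1/2$, but now substituting Zhang's equidistribution estimate for smooth moduli at the appropriate places.

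\textbf{Step 1 (extracting a fixed consecutive gap $d$).} Fix an admissible $k$-tuple $\mathcal{H}=\{h_1<\dots<h_k\}$ with $h_k-h_1\leq 7\times 10^7$ of the kind used in the proof of Theorem~\ref{th:A}. The quantitative form of the GPY--Zhang sieve gives, for $N$ large,
\[
\#\bigl\{n\in[N,2N]\colon n+\mathcal{H}\text{ contains at least two primes}\bigr\}\gg \frac{N}{(\log N)^{k}}.
\]
For each such $n$, the configuration of \emph{all} primes in the window $[n+h_1,n+h_k]$, a window of bounded length at most $7\times 10^7$, belongs to one of a bounded family of ``prime patterns''. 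Pigeonholing over this finite family fixes an offset $a$ and a gap $d\leq 7\times 10^7$ such that the set
\[
A_d(N):=\{p\in\mathcal{P}\cap[N,2N]\colon p+d\text{ is the prime immediately following }p\}
\]
has $|A_d(N)|\gg N/(\log N)^{k}$.

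\textbf{Step 2 (Green--Tao transference).} To produce arbitrarily long APs inside $A_d$, build a pseudo-random majorant $\nu$ essentially equal to the squared GPY--Zhang sieve weight $\Lambda_R(n;\mathcal{H})^{2}$, truncated to smooth moduli of size $R=N^{\vartheta/2-\delta}$ as in Zhang's method. Two families of estimates must be verified in the spirit of \cite{GT2008}: (i) the linear forms condition for $\nu$ along the affine forms $n\mapsto n+jr+h_i$ ($0\leq j\leq t-1$, $1\leq i\leq k$) arising from an ambient $t$-term AP, and (ii) the correlation condition bounding $\sum_{n}\nu(n+y_1)\cdots\nu(n+y_m)$. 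Both reduce to asymptotic evaluations of multidimensional truncated divisor sums over smooth moduli; the equidistribution input required beyond Bombieri--Vinogradov is furnished precisely by Zhang's theorem. Granted (i) and (ii), Step~1 ensures that the indicator of $A_d$ has positive density with respect to $\nu$, so the Green--Tao transference theorem produces $t$-term APs inside $A_d$ for every fixed $t$. A final pigeonhole over the finitely many values of $d$ arising from admissible tuples of length $\leq 7\times 10^7$ yields a single $d$ that works for all $t$ simultaneously.

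\textbf{Main obstacle.} The decisive difficulty is the pseudo-randomness verification of the Zhang-restricted sieve weight $\nu$: in the original Green--Tao argument the majorant is an unrestricted Goldston--Y\i ld\i r\i m truncated divisor sum, whereas here the sum defining $\nu$ ranges only over smooth moduli, so the linear forms and correlation conditions must be re-established using Zhang's equidistribution theorem together with the Motohashi--Pintz decomposition \cite{MP2008}, while controlling error terms uniformly in the $\gg tk$ linear forms induced by the AP. A secondary, routine issue is the passage from ``at least two primes in $n+\mathcal{H}$'' to ``two \emph{consecutive} primes with gap $d$'' in Step~1; this is handled by subtracting, via Brun--Titchmarsh upper bounds in windows of bounded length, the negligible contribution of $n$ for which an extra prime lies strictly between the detected pair.
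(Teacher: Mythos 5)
Your overall strategy -- combine Zhang's input with the Green--Tao transference along the lines of \cite[2010]{Pin2010a} -- is indeed the route the paper takes (its proof of Theorem~\ref{th:6} consists of invoking the Main Theorem together with the machinery of Section~7 of \cite[2010]{Pin2010a}). But there is a genuine gap at the decisive step, and it is exactly the point the paper is at pains to emphasize in Section~\ref{sec:3}: a quantitative form of $\text{\rm DHL}(k,2)$ alone ``does not help in showing any of Theorems~1--6''. Your Step~1 produces only a \emph{cardinality} bound $|A_d(N)|\gg N/(\log N)^k$ by an unweighted pigeonhole, and you then assert that ``Step~1 ensures that the indicator of $A_d$ has positive density with respect to $\nu$''. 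That implication is false as stated: $\nu$ has average $\asymp 1$ but is concentrated on those $n$ for which \emph{every} component of $n+\mathcal H$ is almost prime, and nothing in your Step~1 prevents most of $A_d$ from consisting of $n$ where some other component $n+h_m$ has small prime factors, so that $\Lambda_R(n;\mathcal H,k+\ell)^2$, hence $\nu(n)$, is negligible there; a set of cardinality $N/(\log N)^k$ can perfectly well carry $o(N)$ of the $\nu$-mass. The bridge the paper uses is precisely its Main Theorem $\text{\rm DHL}^*(k,2)$: one gets $\gg_k \mathfrak S(\mathcal H)N/\log^k N$ integers $n$ for which $n+\mathcal H$ contains two \emph{consecutive} primes \emph{and} $P^-\bigl(P_{\mathcal H}(n)\bigr)>n^{c_1(k)}$. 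The almost-primality in each component (supplied by Lemmas~\ref{lem:1}--\ref{lem:2}, the Motohashi--Pintz smoothed sieve) gives a pointwise lower bound for the sieve weight at these $n$, so that the suitably normalized indicator of this good set is majorized by $\nu$ and has positive mean; only then does the relative Szemer\'edi/transference theorem apply. You never establish (or even state) this restricted count, so the transference step has no input.

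A secondary point: your fallback for consecutiveness, ``subtract via Brun--Titchmarsh the $n$ with an extra prime strictly between the detected pair'', does not work quantitatively. Without retaining the $k$-dimensional sieve condition on the whole tuple, the upper bound for such $n$ is only of size $N/\log^3 N$, which dwarfs the detected $N/\log^k N$; the paper instead bounds these exceptional $n$ by Lemma~\ref{lem:3} \emph{with} the constraint $P^-(P_{\mathcal H}(n))>N^{c_1(k)}$ (giving $N/\log^{k+1}N$ per shift $h$) and sums with Lemma~\ref{lem:4}. Your pattern pigeonhole over the bounded window would in fact handle consecutiveness, but only if carried out on the $\nu$-weighted mass rather than on cardinalities -- which again returns you to the missing weighted (or almost-primality-restricted) count that the paper's Main Theorem provides.
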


\section{Preparation for the proofs}
\label{sec:3}

All the proofs use some modified form of the conjecture of Dickson \cite[1904]{Dic1904} about $k$-tuples of primes.
(His original conjecture referred for linear forms with integer coefficients.)
Let $\mathcal H = \{h_i\}_{i = 1}^k$ be a $k$-tuple of different non-negative integers.
We call $\mathcal H$ admissible, if
\beq
\label{eq:3.1}
P_{\mathcal H}(n) = \prod_{i = 1}^k (n - h_i)
\eeq
has no fixed prime divisor, that is, if the number $\nu_p(\mathcal H)$ of residue classes covered by $\mathcal H$ $\text{\rm mod }p$ satisfies
\beq
\label{eq:3.2}
\nu_p(\mathcal H) < p \ \text{ for } \ p \in \mathcal P.
\eeq
This is equivalent to the fact that the singular series
\beq
\label{eq:3.3}
\mathfrak S(\mathcal H) = \prod_p \left(1 - \frac{\nu_p(\mathcal H)}{p} \right) \left(1 - \frac1{p}\right)^{-k} > 0.
\eeq
Dickson conjectured that if $\mathcal H$ is admissible, then all $n + h_i$ will be primes simultaneously for infinitely many values of~$n$.

Hardy and Littlewood \cite[1923]{HL1923}, probably unaware of Dickson's conjecture, formulated a quantitative version of it, according to which
\beq
\label{eq:3.4}
\pi_{\mathcal H}(x) = \sum_{\substack{n \leq x\\
n + h_i \in \mathcal P(1 \leq i \leq k)}} 1 = \bigl(\mathfrak S(\mathcal H) + o(1) \bigr) \frac{x}{\log^k x}.
\eeq

In the work \cite[2009]{GPY2009} we attacked (but missed by a hair's breadth) the following weaker form of Dickson's conjecture which I called

\begin{ConjDHLk2}
If $\mathcal H$ is an admissible $k$-tuple, then $n + \mathcal H$ contains at least two primes for infinitely many values of~$n$.
\end{ConjDHLk2}

It is clear that if $\text{\rm DHL}(k,2)$ is proved for any $k$ (or even for any single $k$-tuple $\mathcal H_k$), then the Bounded Gap Conjecture is true.
$\text{\rm DHL}(k,2)$ was shown very recently by Y. T. Zhang \cite[2013]{Zhang2013} for $k \geq k_0 = 3.5 \times 10^6$ and this implied his Theorem~\ref{th:A}, the infinitude of gaps of size $\leq 7 \cdot 10^7$.

However, results of this type cannot exclude the existence of other primes and therefore give information on numbers expressible as difference of two primes, in the optimal case of Zhang's very strong Theorem~\ref{th:A} prove the existence of many weak Polignac numbers.
However, they do not provide more information about (strong) Polignac numbers than the very deep fact that $\mathcal D_s \neq \emptyset$ and they do not help in showing any of Theorems~\ref{th:1}--\ref{th:6}.
For example, in case of Theorem~\ref{th:6} they do not yield, let say, $4$-term arithmetic progressions of primes and a bounded number $d$ such that $p + d$ should also be prime for all four elements of the progression (even if we do not require that $p$ and $p + d$ should be \emph{consecutive} primes).

By a combination of the ideas of D. Goldston, C. Y{\i}ld{\i}r{\i}m, Y. Zhang and mine, I am able to show a much stronger form of Conjecture $\text{\rm DHL}(k,2)$ which may be applied towards the proof of Theorems \ref{th:1}--\ref{th:6}.
In case of Theorem~\ref{th:6} the ground-breaking ideas of Green and Tao \cite[2008]{GT2008} have to be used too, of course.

In view of Zhang's recent result the stronger form of Conjecture $\text{\rm DHL}(k,2)$ will obtain already the name Theorem $\text{\rm DHL}^*(k,2)$ for large enough $k$ and can be formulated as follows, first still as a conjecture for $k < 3.5 \cdot 10^6$.

Let $P^-(m)$ denote the smallest prime factor of~$m$.

\begin{ConjDHLstark2}
Let $k \geq 2$ and $\mathcal H = \{h_i\}_{i = 1}^k$ be any admissible $k$-tuple, $N \in Z^+$, $\varepsilon > 0$ sufficiently small $(\varepsilon < \varepsilon_0)$
\beq
\label{eq:3.4second}
\mathcal H \subset [0, H], \quad H \leq \varepsilon \log N, \quad P_{\mathcal H}(n) = \prod_{i = 1}^k (n + h_i).
\eeq
We have then positive constants $c_1(k)$ and $c_2(k)$ such that the number of integers $n \in [N, 2N)$ such that $n + \mathcal H$ contains at least two \emph{\rm consecutive} primes and almost primes in each components (i.e.\ $P^-(P_{\mathcal H}(n)) > n^{c_1(k)}$) is at least
\beq
\label{eq:3.5}
c_2(k)\mathfrak S(\mathcal H) \frac{N}{\log^k N} \ \text{ for $N > N_0(\mathcal H)$}.
\eeq
\end{ConjDHLstark2}

One can see that we have a looser condition than in $\text{\rm DHL}(k,2)$ as far as the elements of $\mathcal H$ are allowed to tend to infinity as fast as $\varepsilon \log N$.
On the other hand we get stronger consequences as

(i) we can prescribe that the two primes $n + h_i$ and $n + h_j$ in our $k$-tuple should be \emph{consecutive};

(ii) we have almost primes in each component $n + h_i$;

(iii) we get the lower estimate \eqref{eq:3.5} for the number of the required $n$'s with the above property.

The condition $n \in [N, 2N)$ makes usually no problem but in case of the existence of Siegel zeros some extra care is needed if we would like to have effective results (see Section 8 for this).

After this it is easy to formulate (but not to prove) our

\begin{maintheorem}
Conjecture $\text{\rm DHL}^*(k,2)$ is true for $k \geq 3.5 \times 10^6$.
\end{maintheorem}

\begin{proof}
Since this result contains Zhang's Theorem and even more, it is easy to guess that a self contained proof would be hopelessly long (and difficult).
We therefore try to describe only the changes compared to different earlier works.

The first pillar of Zhang's work is the method of proof of our Theorem~\ref{th:B}.
Although he supposes $\mathcal H$ as a constant the method of proof of Theorem~\ref{th:B} (see Propositions~1 and 2 in \cite[2009]{GPY2009})
allow beyond $H \ll \log N$ (required by \eqref{eq:3.4second} above) the much looser condition $H\ll N^{1/4 - \varepsilon}$ for any $\varepsilon > 0$.

The second pillar of Zhang's work is to show that distribution of primes according to non-smooth moduli, i.e.\ without any prime divisor $> N^b$ for any fixed small constant $b$, can be neglected.
As mentioned in the Introduction we showed this already much earlier in a joint work with Y. Motohashi \cite[2008]{MP2008}.
This work also supposed $\mathcal H$ to be a constant but the only place where actually more care is needed in the proof is (3.11) of \cite[2008]{MP2008}.
On the other hand, allowing here the condition
\beq
\label{eq:3.6}
H \ll \log N,
\eeq
the same simple argument as in Section 6 of \cite[2009]{GPY2009} adds an additional error term
\beq
\label{eq:3.7}
k^2 \log\log\log R \ll \log\log\log R
\eeq
to the right-hand side of (3.11) of \cite[2008]{MP2008} which is far less than the present error term $\log R_0 = \log R/(\log\log R)^5$.
Otherwise the proof works without any change, everything remains uniform under our condition \eqref{eq:3.6} above.

\begin{remark}
The crucial Lemmas 3 and 4 of our work \cite[2008]{MP2008} contain an additional factor $\gamma(n, \mathcal H)$.
However, by the definition (4.17) of \cite[2008]{MP2008} we have $\gamma(n, \mathcal H) = 1$ if $P^-\bigl(P_{\mathcal H}(n)\bigr) > R^\eta$ for any fixed $\eta > 0$.
In such a way the extra factor $\gamma(n, \mathcal H)$ does not affect the validity of our Lemma~\ref{lem:2} below since the asymptotic provided by Lemma~\ref{lem:3} for the right-hand side of \eqref{eq:3.10} (and similarly the analogue of it for primes, Lemma~\ref{lem:4}) is the same as if we used the constant weight $1$ instead of $\gamma(n, \mathcal H)$.
\end{remark}

No change is required in the third pillar of Zhang's work where he proves some sort of extension of the Bombieri--Vinogradov theorem for smooth moduli and the residue classes appearing by the method of Theorem~\ref{th:B}.

However, the proof of Theorem $\text{\rm DHL}^*(k,2)$ (for $k$ large enough, $k \geq 3.5 \cdot 10^6$) requires a further important idea, namely Lemmas 3--4 of the author's work \cite[2010]{Pin2010a}.
This we formulate now as

\begin{lemma}
\label{lem:1}
Let $N^{c_0} < R \leq \sqrt{N/p}(\log N)^{-C}$, $p \in \mathcal P$, $p < R^{C_0}$ with a sufficiently small positive $c_0$ and sufficiently large $C$.
Then we have with the notation
\beq
\label{eq:3.8}
\Lambda_R(n; \mathcal H, k + \ell) = \frac{1}{(k + \ell)!} \sum_{d \leq R, d \mid P_{\mathcal H}(n)} \mu(d) \left(\log \frac{R}{d}\right)^{k + \ell} \quad (\ell \leq k)
\eeq
the relation
\beq
\label{eq:3.9}
\sum_{\substack{n \in [N, 2N)\\
p\mid P_{\mathcal H}(n)}} \Lambda_R(n; \mathcal H, k + \ell)^2 \ll_k \frac{\log p}{p \log R} \sum_{n \in [N, 2N)} \Lambda_R(n; \mathcal H, k + \ell)^2.
\eeq
\end{lemma}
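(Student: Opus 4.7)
\medskip

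\noindent\textbf{Proof proposal.}
The strategy is to exploit the pairing of divisors of $P_{\mathcal H}(n)$ when $p\mid P_{\mathcal H}(n)$: every square-free divisor $d\leq R$ of $P_{\mathcal H}(n)$ is either $d=e$ with $(e,p)=1$ or $d=pe$ with $(e,p)=1$. Grouping these two contributions in \eqref{eq:3.8}, one obtains
\beq
\label{eq:plan1}
\Lambda_R(n;\mathcal H,k+\ell)=\frac{1}{(k+\ell)!}\sum_{\substack{e\mid P_{\mathcal H}(n)\\ (e,p)=1,\,e\leq R/p}}\mu(e)\!\left\{\!\left(\log\tfrac{R}{e}\right)^{\!k+\ell}\!\!-\left(\log\tfrac{R}{pe}\right)^{\!k+\ell}\!\right\}+E_R(n),
\eeq
where $E_R(n)$ is a short-range ``boundary'' sum over $R/p<e\leq R$, $(e,p)=1$.

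\smallskip

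Next, I would replace the bracketed difference by its integral representation
\beq
\label{eq:plan2}
\left(\log\tfrac{R}{e}\right)^{k+\ell}-\left(\log\tfrac{R}{pe}\right)^{k+\ell}=(k+\ell)\int_{0}^{\log p}\!\left(\log\tfrac{Re^{-t}}{e}\right)^{k+\ell-1}\!dt.
\eeq
Substituting \eqref{eq:plan2} into \eqref{eq:plan1} and interchanging the summation over $e$ with the integration in $t$, the main part of $\Lambda_R(n;\mathcal H,k+\ell)$ is expressed as $\int_{0}^{\log p}\widetilde\Lambda_{Re^{-t}}(n;\mathcal H,k+\ell-1)\,dt$, where $\widetilde\Lambda$ is the obvious truncated sieve function restricted to divisors coprime to $p$. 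Cauchy--Schwarz in $t$ then gives
\beq
\label{eq:plan3}
\Lambda_R(n;\mathcal H,k+\ell)^{2}\ll_{k}(\log p)\int_{0}^{\log p}\widetilde\Lambda_{Re^{-t}}(n;\mathcal H,k+\ell-1)^{2}\,dt+E_R(n)^{2}.
\eeq

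\smallskip

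The third step is to sum \eqref{eq:plan3} over $n\in[N,2N)$ with $p\mid P_{\mathcal H}(n)$. Since this restriction places $n$ in one of $\nu_p(\mathcal H)\leq k$ residue classes mod $p$, and since $\widetilde\Lambda$ involves only divisors coprime to $p$, the standard GPY--type asymptotic (Propositions~1--2 of \cite[2009]{GPY2009}, applied in the Motohashi--Pintz framework of \cite[2008]{MP2008}, which accommodates $H\ll\log N$) yields, for each fixed $t\in[0,\log p]$,
\beq
\label{eq:plan4}
\sum_{\substack{n\in[N,2N)\\ p\mid P_{\mathcal H}(n)}}\widetilde\Lambda_{Re^{-t}}(n;\mathcal H,k+\ell-1)^{2}\ll_{k}\frac{N\,\mathfrak S(\mathcal H)}{p}\,\bigl(\log Re^{-t}\bigr)^{k+2\ell-2}.
\eeq
Integrating in $t$, combined with $\log Re^{-t}\leq\log R$, bounds the main contribution in \eqref{eq:plan3} by $\ll_{k}(\log p)^{2}\,N\mathfrak S(\mathcal H)(\log R)^{k+2\ell-2}/p$. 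Comparing with the known asymptotic
$$
\sum_{n\in[N,2N)}\Lambda_R(n;\mathcal H,k+\ell)^{2}\asymp_{k}N\,\mathfrak S(\mathcal H)\,(\log R)^{k+2\ell},
$$
the ratio is $\ll_{k}(\log p)^{2}/(p(\log R)^{2})$. The hypothesis $p<R^{C_{0}}$ forces $\log p\leq C_{0}\log R$, so $(\log p)^{2}/(\log R)^{2}\leq C_{0}\cdot\log p/\log R$, which delivers the claimed bound \eqref{eq:3.9}.

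\smallskip

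The main obstacle is the control of $E_R(n)$: the boundary term arises from a thin range $e\in(R/p,R]$ where the cancellation used in \eqref{eq:plan2} is not available, and one must verify that after squaring and summing over $n$ with $p\mid P_{\mathcal H}(n)$ its contribution is absorbed into the same order as the main term. This reduces to a direct application of the sieve asymptotic to a shortened log-weight $(\log R/e)^{k+\ell}$ confined to $\log R/e\in[0,\log p]$, and it is precisely here that the uniformity in the scale parameter $Re^{-t}$ provided by the Motohashi--Pintz framework (with the harmless additional error \eqref{eq:3.7}) is essential.
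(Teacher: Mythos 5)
The paper does not actually prove Lemma~1: it is stated as a reformulation of Lemmas~3--4 of \cite[2010]{Pin2010a}, and the accompanying remark simply says ``Lemmas~1 and 2 were already proved in \cite[2010]{Pin2010a}.'' So there is no in-paper proof to compare against line by line; what can be assessed is whether your argument would carry the claim.

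Your key structural idea --- pairing each squarefree divisor $e$ of $P_{\mathcal H}(n)$ coprime to $p$ with $pe$, so that $\mu(pe)=-\mu(e)$ produces the differenced weight $(\log R/e)^{k+\ell}-(\log R/(pe))^{k+\ell}$ --- is exactly the right mechanism, and the mean-value representation and Cauchy--Schwarz in $t$ are sound (though one should note that the resulting $\widetilde\Lambda_{Re^{-t}}$ is truncated at $e\le R/p$, not at $Re^{-t}$, so it is not literally ``the obvious truncated sieve function''; this truncation mismatch has to be smoothed out by a binomial expansion in $\log(pe^{-t})$, a further bookkeeping step you do not mention). That part of the argument, once the asymptotics for the restricted GPY sums (divisors coprime to $p$, $n$ in $\nu_p(\mathcal H)\le k$ residue classes mod $p$) are supplied, does give a contribution of order $\tfrac{(\log p)^2}{p(\log R)^2}\sum_n\Lambda_R^2$, which is even slightly stronger than \eqref{eq:3.9}.

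The genuine gap is the boundary term $E_R(n)$, and it is not the technicality you present it as. When you square $E_R(n)$ and sum over $n$ with $p\mid P_{\mathcal H}(n)$, the main term is
\begin{equation*}
\frac{N\,g(p)}{p\,((k+\ell)!)^2}\sum_{\substack{R/p<e_1,e_2\le R\\ (e_1e_2,p)=1}}\mu(e_1)\mu(e_2)\,\frac{g([e_1,e_2])}{[e_1,e_2]}\Bigl(\log\tfrac{R}{e_1}\Bigr)^{\!k+\ell}\Bigl(\log\tfrac{R}{e_2}\Bigr)^{\!k+\ell},
\end{equation*}
where $g(m)=\nu_m(\mathcal H)$. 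If you discard the Möbius oscillation and use only $|\mu|\le1$ together with the trivial bound $(\log R/e_i)^{k+\ell}\le(\log p)^{k+\ell}$, the resulting double sum over the short interval $(R/p,R]$ is of order $(\log R)^{2k-2}$ up to $\log p$ factors, which for $\ell$ small dwarfs the target $\tfrac{\log p}{p\log R}(\log R)^{k+2\ell}$. So ``a direct application of the sieve asymptotic'' does not work: the boundary sum is a \emph{difference} of four complete GPY-type double sums (obtained by writing $\sum_{R/p<e\le R}=\sum_{e\le R}-\sum_{e\le R/p}$ in each variable), and each of those four is individually of size comparable to $\tfrac{1}{p}\sum_n\Lambda_R^2$. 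Only after exhibiting the cancellation among them --- which amounts to carrying out the full contour-integral/Taylor-expansion analysis of GPY with the modified Euler factor at $p$ --- do you recover a bound of size $\tfrac{\log p}{p\log R}\sum_n\Lambda_R^2$. In other words, the boundary piece is not an error term dominated by your main term; it is of the same order as the claimed bound, and controlling it requires precisely the asymptotic machinery your decomposition was designed to sidestep. The cleaner route (and, as far as one can infer, the route of \cite[2010]{Pin2010a}) is to expand $\sum_{n:\,p\mid P_{\mathcal H}(n)}\Lambda_R(n)^2$ directly, split each $d_i$ into its $p$-part and $p$-coprime part, and evaluate the resulting double sum with the weight $y_e=(\log R/e)^{k+\ell}-1_{e\le R/p}(\log R/(pe))^{k+\ell}$ in one stroke, without ever isolating a separate boundary contribution.
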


Lemma~\ref{lem:1} immediately implies

\begin{lemma}
\label{lem:2}
Let $N^{c_0} < R \leq N^{1/(2 + \eta)}(\log N)^{-C}, \quad \eta > 0$.
We have then
\beq
\label{eq:3.10}
\sum_{\substack{n \in [N, 2N)\\
P^- (P_{\mathcal H}(n)) < R^\eta}} \Lambda_R(n; \mathcal H, k + \ell)^2 \ll_k \eta \sum_{n \in [N, 2N)} \Lambda_R(n; \mathcal H, k + \ell)^2.
\eeq
\end{lemma}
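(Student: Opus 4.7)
The plan is to deduce Lemma~\ref{lem:2} from Lemma~\ref{lem:1} by a straightforward union bound over the small primes that could be the minimum prime factor of $P_{\mathcal H}(n)$. If $P^-(P_{\mathcal H}(n)) < R^\eta$, then there is some prime $p < R^\eta$ with $p \mid P_{\mathcal H}(n)$, so
\beq
\label{eq:plan1}
\sum_{\substack{n \in [N,2N)\\ P^-(P_{\mathcal H}(n)) < R^\eta}} \Lambda_R(n;\mathcal H,k+\ell)^2
\;\leq\; \sum_{\substack{p \in \mathcal P\\ p < R^\eta}} \;\sum_{\substack{n \in [N,2N)\\ p\mid P_{\mathcal H}(n)}} \Lambda_R(n;\mathcal H,k+\ell)^2.
\eeq

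Next I would verify that each prime $p < R^\eta$ in \eqref{eq:plan1} satisfies the hypotheses of Lemma~\ref{lem:1}. The condition $p < R^{C_0}$ holds provided $\eta < C_0$, which is harmless since we care about small $\eta$. The other hypothesis $R \leq \sqrt{N/p}(\log N)^{-C}$ is exactly why Lemma~\ref{lem:2} has the upper bound $R \leq N^{1/(2+\eta)}(\log N)^{-C}$: for $p \leq R^\eta$ one has $R^2 p \leq R^{2+\eta}$, and $R^{2+\eta} \leq N (\log N)^{-(2+\eta)C}$ by the hypothesis on $R$, so choosing the constant $C$ in Lemma~\ref{lem:2} a bit larger than the $C$ of Lemma~\ref{lem:1} secures the bound $R \leq \sqrt{N/p}(\log N)^{-C}$ uniformly in $p < R^\eta$.

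Given this, I can apply Lemma~\ref{lem:1} inside each inner sum in \eqref{eq:plan1}, yielding
\beq
\label{eq:plan2}
\sum_{\substack{n \in [N,2N)\\ P^-(P_{\mathcal H}(n)) < R^\eta}} \Lambda_R(n;\mathcal H,k+\ell)^2
\;\ll_k\; \left(\sum_{\substack{p \in \mathcal P\\ p < R^\eta}} \frac{\log p}{p\log R}\right) \sum_{n \in [N,2N)} \Lambda_R(n;\mathcal H,k+\ell)^2.
\eeq
Finally Mertens' theorem gives $\sum_{p < R^\eta} \frac{\log p}{p} = \eta\log R + O(1)$, so the bracket in \eqref{eq:plan2} is $\ll \eta$ (for $\eta$ bounded away from $0$; for extremely small $\eta$ the $O(1)$ from Mertens is dominated by $\eta\log R$ as long as $R \geq N^{c_0}$, so the bound $\ll \eta$ still holds). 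This delivers \eqref{eq:3.10}.

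The only step that is not immediate bookkeeping is the verification that the conditions of Lemma~\ref{lem:1} hold for every prime up to $R^\eta$ simultaneously — i.e.\ that the upper bound on $R$ in Lemma~\ref{lem:2} is calibrated against the worst-case prime $p \approx R^\eta$. This is the real point at which the exponent $1/(2+\eta)$ appears, and it is what makes the statement natural rather than tautological; the remainder is a union bound plus Mertens.
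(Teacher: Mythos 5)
Your proof is correct and is exactly the route the paper intends: the paper states that Lemma~\ref{lem:1} ``immediately implies'' Lemma~\ref{lem:2}, and the implicit argument is precisely your union bound over primes $p < R^\eta$ followed by Mertens' estimate $\sum_{p<R^\eta}\frac{\log p}{p}\ll \eta\log R$, with the upper bound $R\le N^{1/(2+\eta)}(\log N)^{-C}$ calibrated so that every such $p$ satisfies the hypothesis $R\le\sqrt{N/p}(\log N)^{-C}$ of Lemma~\ref{lem:1}. Your handling of the degenerate very-small-$\eta$ regime is slightly loose but harmless, since for $R^\eta<2$ the left-hand side is empty and for $R^\eta\ge 2$ one has $\eta\log R\ge\log 2$, so the Mertens $O(1)$ is always absorbed.
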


\begin{remark}
Lemmas~\ref{lem:1} and \ref{lem:2} were already proved in \cite[2010]{Pin2010a} under the loose condition $H \ll \log N$.
\end{remark}

Lemma~\ref{lem:2} asserts that numbers $n$ where $P_{\mathcal H}(n)$ has a prime factor $< R^{\eta}$ (equivalently $<N^b$) with a small enough value of $\eta$ (or $b$) might be neglected, since the weight used in all proofs is actually of type \eqref{eq:3.8}.
The value of $\eta$ (or $b$) depends on $k$.

These results play a crucial role in the common generalization of the Green--Tao theorem and of our Theorem~\ref{th:B} (cf.\ \cite[2010]{Pin2010a}) and also in the proof that prime gaps $< \varepsilon \log p$ form a positive proportion of all gaps for any $\varepsilon > 0$ (proved in a joint work with Goldston and Y{\i}ld{\i}r{\i}m). 

These four pillars lead finally to the stronger form of Theorem $\text{\rm DHL}^*(k,2)$  if we combine it with a standard assertion following from Selberg's sieve, which we can formulate in this special case as

\begin{lemma}
\label{lem:3}
Let $0 < \alpha < 1/2$ be any constant.
Then
\beq
\label{eq:3.11}
\sum_{\substack{n \in [N, 2N)\\
P^-(P_{\mathcal H}(n)) > N^\alpha}} 1 \ll_k \frac{N\alpha^{-k}}{\log^k N} \mathfrak S(\mathcal H).
\eeq
\end{lemma}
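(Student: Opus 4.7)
The plan is to apply Selberg's upper bound sieve to the finite sequence $A = \{P_{\mathcal H}(n) : N \leq n < 2N\}$ with sifting parameter $z = N^\alpha$. By admissibility the local density is $\omega(p) = \nu_p(\mathcal H) < p$, and for all but finitely many $p$ one has $\omega(p) = k$ exactly, so this is a classical $k$-dimensional sieve problem in which the main term is controlled by the singular series $\mathfrak S(\mathcal H)$.

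First I would fix a Selberg level $D = N^{1/2}(\log N)^{-B}$ with $B$ large. Since $\alpha < 1/2$ is a fixed constant, we have $z \leq \sqrt{D}$ for $N$ large. The $\Lambda^2$ construction of Selberg then yields
\begin{equation*}
S(A, z) = \#\{n \in [N, 2N) : P^-(P_{\mathcal H}(n)) > z\} \leq \frac{N}{J(D, z)} + E,
\end{equation*}
with Selberg diagonal sum
\begin{equation*}
J(D, z) = \sum_{\substack{d \leq \sqrt{D} \\ p \mid d \Rightarrow p < z}} \mu^2(d) \prod_{p \mid d} \frac{\omega(p)}{p - \omega(p)}
\end{equation*}
and an error term bounded trivially by $E \ll \sum_{d \leq D}\mu^2(d)\tau_k(d) \ll D (\log D)^{k-1}$, which is negligible compared with the required order $N/\log^k N$ by the choice of $D$.

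Next I would bound $J(D, z)$ from below by (a fixed fraction of) the full Euler product. A standard argument via the Rankin trick, together with the comparison
\begin{equation*}
\prod_{p < z}\left(1 - \frac{\omega(p)}{p}\right)^{-1} = \mathfrak S(\mathcal H)^{-1} \prod_{p < z}\left(1 - \frac{1}{p}\right)^{-k}(1 + o(1)),
\end{equation*}
where the correction product converges absolutely thanks to $(1 - \omega(p)/p)(1 - 1/p)^{-k} = 1 + O(p^{-2})$ for large $p$, combined with Mertens' theorem $\prod_{p < z}(1 - 1/p)^{-1} \sim e^\gamma \log z$, yields
\begin{equation*}
J(D, z) \gg_k \frac{(\log z)^k}{\mathfrak S(\mathcal H)} = \frac{(\alpha \log N)^k}{\mathfrak S(\mathcal H)}.
\end{equation*}
Substituting back into the Selberg bound gives exactly $S(A, z) \ll_k N\alpha^{-k}\mathfrak S(\mathcal H)/\log^k N$, which is the claim.

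The only non-mechanical step is the lower bound on $J(D, z)$: one must verify that restricting $d$ both to $d \leq \sqrt{D}$ and to prime factors $< z$ still captures, up to a constant depending only on $k$, the full Euler product $\prod_{p < z}(1 - \omega(p)/p)^{-1}$. This is guaranteed because $\log\sqrt{D}/\log z = (1/2 + o(1))/\alpha$ stays bounded away from zero, so the tail $d > \sqrt D$ in the expanded Euler sum contributes only a constant factor of the whole — a textbook Rankin-trick estimate whose implied constant depends on $k$ and $\alpha$ but not on $N$ or $\mathcal H$. Everything else (Mertens' theorem, the singular-series factorization, the trivial error bound) is routine.
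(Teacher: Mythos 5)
Your argument is the standard $k$-dimensional Selberg upper-bound sieve, and it is essentially correct as a sketch. The paper itself does not reproduce this argument: it simply cites the lemma as Theorem 5.1 of Halberstam--Richert (\emph{Sieve Methods}) or Theorem 2 in \S 2.2.2 of Greaves (\emph{Sieves in Number Theory}), noting that the bound remains valid under the weaker hypothesis $H \ll \log N$; your sketch is just the content of those cited theorems, so the approach is the same.
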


\begin{proof}
This is Theorem 5.1 of \cite[1974]{HR1974} or Theorem~2 in \S 2.2.2 of \cite[2001]{Gre2001}.
This is also valid if we assume only $H \ll \log N$.
\end{proof}

We further need a generalization of Gallagher's theorem proved by the author which we formulate as

\begin{lemma}
\label{lem:4}
Let $\mathcal H_k$ be an arbitrary admissible $k$-tuple with
\beq
\mathcal H =  \mathcal H_k \subseteqq [0, H].
\label{3.11a}
\eeq
Then we have for any $\eta > 0$
\beq
S_{\mathcal H}(H) := \frac1{H} \sum_{h = 1}^{H} \frac{\mathfrak S(\mathcal H \cup h)}{\mathfrak S(\mathcal H)} = 1 + O(\eta)
\label{3.11b}
\eeq
if
\beq
H \geq \exp (k^{1/\eta}).
\label{3.11c}
\eeq
\end{lemma}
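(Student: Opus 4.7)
The plan is to decompose $\mathfrak{S}(\mathcal{H} \cup \{h\})/\mathfrak{S}(\mathcal{H})$ as an Euler product, exploit the fact that each local factor has mean exactly $1$ over any complete residue system, and then control the two error sources that arise from truncating the product at a threshold $P$: a head error from incomplete residue classes modulo small primes, and a tail error from the variability of local factors at large primes. The negligible $O(k/H)$ contribution from the at most $k$ values $h \in \mathcal{H}$ is absorbed into the error from the start.

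Introduce $\epsilon_p(h) \in \{0,1\}$, the indicator that $h \not\equiv h_i \pmod p$ for every $i$. For $h \notin \mathcal{H}$ a direct computation yields
\[
\frac{\mathfrak{S}(\mathcal{H} \cup \{h\})}{\mathfrak{S}(\mathcal{H})} = \prod_p L_p(h), \qquad L_p(h) = \frac{p\bigl(p - \nu_p(\mathcal{H}) - \epsilon_p(h)\bigr)}{\bigl(p - \nu_p(\mathcal{H})\bigr)(p-1)}.
\]
Separating residues, the $\nu_p(\mathcal{H})$ covered residues each contribute $p/(p-1)$ and the $p - \nu_p(\mathcal{H})$ uncovered ones each contribute $p(p - \nu_p(\mathcal{H}) - 1)/\bigl((p - \nu_p(\mathcal{H}))(p-1)\bigr)$, giving the key identity $\tfrac{1}{p}\sum_{h \bmod p} L_p(h) = 1$.

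Now choose a truncation parameter $P$, to be set at the end roughly equal to $k^{1/\eta}$, and split $\prod_p L_p(h) = \mathcal{T}_{\leq P}(h) \cdot \mathcal{T}_{>P}(h)$. Expanding the head as $\prod_{p \leq P}(1 + \delta_p(h)) = \sum_d \prod_{p\mid d}\delta_p(h)$ over squarefree $d$ with largest prime factor $\leq P$, where $\delta_p = L_p - 1$, the Chinese Remainder Theorem shows that $\prod_{p\mid d}\delta_p(h)$ is a function of $h \bmod d$ of mean zero, so averaging over $h \in [1,H]$ contributes $O(d/H)$ per term. Summed over the $2^{\pi(P)} \leq e^{O(P)}$ squarefree $d$ this yields a head error of $O(e^{O(P)}/H) = O(\eta)$ as soon as $P \ll \log H$. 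For the tail, one has $L_p(h) = p/(p-1)$ in the "bad" case $p \mid h - h_i$ and $L_p(h) = 1 - \nu_p(\mathcal{H})/\bigl((p-\nu_p(\mathcal{H}))(p-1)\bigr) = 1 + O(k/p^2)$ otherwise (valid for $p > k$); the generic contribution $\prod_{p > P}(1 + O(k/p^2)) = 1 + O(k/P)$ is uniform in $h$, while the bad contribution $\prod_{p > P,\, p \mid D(h)} p/(p-1)$, with $D(h) = \prod_{i : h_i \neq h}(h - h_i)$, is bounded by expansion into a divisor sum and the counting $|\{h \leq H : p \mid h - h_i\}| \leq H/p + 1$, producing a mean of $1 + O(k/P)$. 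Balancing both errors, the hypothesis $H \geq \exp(k^{1/\eta})$ permits a choice of $P$ for which $k/P$ and $e^{O(P)}/H$ are each $O(\eta)$, yielding $S_{\mathcal{H}}(H) = 1 + O(\eta)$.

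The main technical obstacle I expect is upgrading the averaged tail bound into something that respects the positivity and product structure: a priori, an atypical $h$ divisible by many "bad" primes $p > P$ can make $\mathcal{T}_{>P}(h)$ noticeably larger than $1$, and one must convert the easy bound on $\sum_{p \mid D(h),\, p > P} 1/p$ into a bound on $\prod(1 + 1/(p-1))$ without the Taylor-error blowing up. The remedy is a moment-style expansion $\prod_{p > P,\, p \mid D(h)}(1 + 1/(p-1)) \leq \sum_{d \mid D(h),\, P^-(d) > P} 2^{\omega(d)}/d$, followed by the counting $|\{h \leq H : d \mid D(h)\}| \ll k^{\omega(d)} H/d$ and summation over squarefree $d$ with $P^-(d) > P$. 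All remaining estimates — in particular the tuning of $P$ and the choice of implicit constants — are routine once this divisor-sum bound is in hand.
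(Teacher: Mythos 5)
Your argument is essentially correct, but it is a genuinely different route from the paper's: the paper gives no internal proof at all, it simply quotes Theorem 1 of \cite[2010]{Pin2010} (Pintz's uniform, quantitative version of Gallagher's singular-series theorem, here averaged over the added element $h$ with $\mathcal H$ fixed), whereas you construct a self-contained elementary proof. Your key ingredients are sound: the local factorization $\mathfrak S(\mathcal H\cup\{h\})/\mathfrak S(\mathcal H)=\prod_p L_p(h)$ with $L_p(h)=p\bigl(p-\nu_p(\mathcal H)-\epsilon_p(h)\bigr)/\bigl((p-\nu_p(\mathcal H))(p-1)\bigr)$, the exact mean-one identity $\frac1p\sum_{h\bmod p}L_p(h)=1$, the truncation at $P$ with head error $e^{O(P)}/H$ via CRT, and the tail treated through the divisor-sum (moment) bound with the count $k^{\omega(d)}(H/d+1)$; choosing, say, $P\asymp\log H\ (\geq k^{1/\eta})$ makes both error sources $O(\eta)$ under $H\geq\exp(k^{1/\eta})$, so the hypothesis is used exactly where it should be. Two details deserve to be written out, though both are routine, as you suspect: (i) since the average of a product is not the product of averages, you need the coupling step explicitly --- positivity of the head gives the lower bound, and for the upper bound the cross term $\frac1H\sum_h\mathcal T_{\leq P}(h)\bigl(\mathcal T_{>P}(h)-1-O(k/P)\bigr)$ is controlled by the uniform Mertens bound $\mathcal T_{\leq P}(h)\leq\prod_{p\leq P}\frac{p}{p-1}\ll\log P$ together with your averaged divisor-sum estimate; (ii) in the composite count keep the $+1$ term (your displayed ``$\ll k^{\omega(d)}H/d$'' fails for $d>H$, which does occur since $d$ may be as large as $D(h)\leq(2H)^k$), and note that its total contribution is $\ll H^{-1/2}$ after summing over $d\leq(2H)^k$ with $P^-(d)>P$. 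What the two approaches buy: your proof is elementary, self-contained and makes the dependence on $k$, $H$, $\eta$ transparent; the paper's citation imports a sharper, carefully optimized statement (and its constants) without lengthening the present article.
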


\begin{proof}
This is Theorem 1 of \cite[2010]{Pin2010}.
\end{proof}

\begin{remark}
As it is easy to see Lemma~\ref{lem:4} implies Gallagher's classical theorem \cite[1976]{Gal1976} on the singular series.
\end{remark}

Combining the proofs of Theorems~\ref{th:A} and \ref{th:B} (in the modified forms mentioned above) with the assertion of Lemma~\ref{lem:2} we obtain under the weaker condition \eqref{eq:3.5}, i.e.\ for all admissible $k$-tuples $\mathcal H_k = \{h_i\}_{i = 1}^k$, $h_i < h_{i + 1}$,
\beq
\label{eq:3.13}
\mathcal H = \mathcal H_k \subseteqq [0, \varepsilon \log N],
\eeq
at least
\beq
\label{eq:3.14}
c_2(k) \mathfrak S(\mathcal H) \frac{N}{\log^k N}
\eeq
numbers $n \in [N, 2N)$ such that $n + \mathcal H$ contains at least two primes and almost primes in each components, i.e.
\beq
\label{eq:3.15}
P^-\bigl(P_{\mathcal H}(n)\bigr) > n^{c_1(k)}.
\eeq
However, we must show the same with \emph{consecutive} primes as well.

We can define for any subset $V$ of $\{1,2, \dots, k\}$ the set
\beq
\label{eq:3.16}
V(N) = \bigl\{n \in [N, 2N) :\ n + h_i \in \mathcal P \Leftrightarrow i \in V\bigr\}.
\eeq
Since $k$ is bounded the number of possible subsets $V$ is also bounded, therefore we can choose a $V_0$ such that
\beq
\label{eq:3.17}
V_0 \subset \{1,2,\dots, k\}, \quad |V_0| \geq 2, \quad |V_0(N)| \geq c_3(k) \mathfrak S(\mathcal H) \frac{N}{\log^k N}.
\eeq

Choosing two arbitrary consecutive elements $i,j \in V_0(N)$ with $i < j$ we have at least
\beq
\label{eq:3.18}
c_3(k) \mathfrak S(\mathcal H) \frac{N}{\log^k N}
\eeq
numbers $n \in [N, 2N)$ such that for any  $\mu \in (i, j)$ 
\beq
\label{eq:3.19}
n + h_\mu \notin \mathcal P .
\eeq

We have to assure, however, additionally that for a positive proportion of these numbers $n$ we have also
\beq
\label{eq:3.20}
n + h \notin \mathcal P \ \text{ for } \ h \notin \mathcal H, \ \ h_i < h < h_j.
\eeq
Applying Lemma~\ref{lem:3} for all these values $h$ and summing up we arrive at the conclusion that
\begin{align}
\label{eq:3.21}
\sum_{\substack{n \in [N, 2N)\\ P^-(P_{\mathcal H}(n)) > N^{c_1(k)}}} \sum_{\substack{h\\ h_i < h < h_j\\ n + h \in \mathcal P}} 1
&\leq C_4(k) \frac{N}{\log^{k + 1} N} \sum_{h_i < h < h_j} \mathfrak S(\mathcal H \cup h)\\
&\leq 2C_4(k) \frac{N \varepsilon \mathfrak S(\mathcal H)}{\log^k N}
\notag
\end{align}
by Lemma~\ref{lem:4}.
This shows that if $\varepsilon $ was chosen sufficiently small depending on $k$, i.e.
\beq
\label{eq:3.22}
\varepsilon < \varepsilon_0(k)
\eeq
then our original primes $n + h_i$ and $n + h_j$ are \emph{consecutive} for at least
\beq
\label{eq:3.23}
c_5(k) \mathfrak S(\mathcal H) \frac{N}{\log^k N}
\eeq
elements $n \in [N, 2N)$, thereby showing our Main Theorem.
\end{proof}

\section{Polignac numbers}
\label{sec:4}

1. Proof of Theorem~\ref{th:1}

The fact that $\text{\rm DHL}^*(k, 2)$ implies the positivity of the asymptotic lower density of Polignac numbers is expressed as Corollary~1 of \cite[2010]{Pin2010a} with the value
\beq
\label{eq:4.1}
\frac{1}{k(k - 1)} \prod_{p \leq k} \left(1 - \frac1p\right)
\eeq
and proved in Section~11.
The above value is about $e^{-\gamma} / (k^2 \log k)$ for large values of $k$.
(If $k \to \infty$, they are asymptotically equal.)

1. Proof of Theorem~\ref{th:2}

We again suppose $\text{\rm DHL}^*(k, 2)$ for $k \geq k_0$.

The reasoning is more intricate in this case and the resulting value $C$ is ineffective.
The trivial relation
\beq
\label{eq:4.2}
h_k - h_1 \geq (k - 1) \min_{i > j} (h_i - h_j) \quad (h_i < h_{i + 1})
\eeq
even gives the impression that the best we can prove about localization of Polignac numbers is an interval of type
\beq
\label{eq:4.3}
\bigl[M, (k - 1) M\bigr].
\eeq

Let us suppose that Theorem~\ref{th:2} is false.
Then we have for any $C_0 > 0$ an infinite series of intervals
\beq
\label{eq:4.4}
I_\nu := \bigl[M_\nu, M_\nu + C_\nu\bigr], \quad M_\nu > C_\nu > 4M_{\nu - 1}, \quad M_1 > C_0
\eeq
such that
\beq
\label{eq:4.5}
\mathcal D_s \cap \biggl(\bigcup\limits_{\nu = 1}^\infty I_\nu \biggr) = \emptyset.
\eeq

For $p > k$ we have clearly
\beq
\label{eq:4.6}
\nu_p (\mathcal H_k) < p \ \text{ for } \ p \in \mathcal P,
\eeq
so we have no problem of choosing an admissible system $\mathcal H_k$ in a sufficiently long interval (e.g.\ if $C_k$ is large enough).
Let
\beq
\label{eq:4.7}
\mathcal H_k := \{h_\nu\}_{\nu = 1}^k\,, \quad h_\nu \in I_\nu':= \bigl[M_\nu + C_\nu / 2, M_\nu + C_\nu \bigr].
\eeq
For $h_\nu \in I_\nu$, $h_\mu \in I_\mu$, $\nu < \mu$ we have then
\beq
\label{eq:4.8}
h_\mu - h_\nu \in \bigl[ M_\mu + C_\mu / 2 - 2M_{\mu - 1}, M_\mu + C_\mu \bigr] \subset I_\mu.
\eeq

Since in case of $k \geq k_0$ at least one of the numbers
\beq
\label{eq:4.9}
h_\mu - h_\nu \qquad (1 \leq \nu < \mu \leq k)
\eeq
can be written as a difference of two consecutive primes, \eqref{eq:4.8} contradicts to \eqref{eq:4.5} and thus proves our theorem.

\section{The normalized value distribution of $d_n$}
\label{sec:5}

1. Proof of Theorems \ref{th:3}--\ref{th:4}

Since Theorem \ref{th:4} implies Theorem \ref{th:3} it is sufficient to prove the latter.
The structure of the proof will follow that of Theorem \ref{th:2} proved in the previous section.
Suppose that Theorem \ref{th:4} is false.
In this case we have for a sufficiently small $c^* > 0$ an infinite series of intervals
\beq
\label{eq:5.1}
J_\nu = \bigl[c_\nu, c_\nu + \delta_\nu\bigr], \quad c_\nu > 4\delta_\nu > 20 c_{\nu + 1}, \quad c_1 < c^*
\eeq
such that for $K$ large enough
\beq
\label{eq:5.2}
\left\{ \frac{d_n}{f(n)} \right\}_{n = N}^\infty \cap \biggl(\bigcup_{\nu = 1}^K J_\nu \biggr) = \emptyset, \ \text{ where }
N = N(K) > 0.
\eeq

Let
\beq
\label{eq:5.3}
I_\nu(n) := \bigl[c_\nu f(n), (c_\nu + \delta_\nu) f(n)\bigr], \quad \nu = 1,2, \dots, K.
\eeq
Then we have
\beq
\label{eq:5.4}
d_n \notin \bigcup_{\nu = 1}^K I_\nu(n) \ \text{ for }\ \nu = 1,2, \dots, K, \ \ n \in [N, 2N), \ \ N > N(k).
\eeq
Using our Main Theorem, similarly to \eqref{eq:4.7} we can construct an admissible $k$-tuple
$\mathcal H_k = \{h_\nu\}_{\nu = 1}^k$ with $3.5 \cdot 10^6 \leq k \leq K$ such that
$h_1 > h_2 > \dots > h_k$ and with a sufficiently small $\varepsilon > 0$
\beq
\label{eq:5.5}
h_\nu \in I_\nu'(N) := \left[\left(c_\nu + \frac{\delta_\nu}{2}\right) (1 + \varepsilon) f(N), (c_\nu + \delta_\nu)(1 - \varepsilon) f(N) \right].
\eeq
For $h_\mu \in I_\mu'$, $h_\nu \in I_\nu'$, $\mu < \nu$ we have for $N > \max \bigl(N(K), N_0(\varepsilon)\bigr)$
\beq
\label{eq:5.6}
h_\mu - h_\nu \in \left[\left(c_\mu + \frac{\delta_\mu}{2} - 2c_{\mu + 1}\right) (1 + \varepsilon) f(N), (c_\mu + \delta_\mu)(1 - \varepsilon) f(N)\right] := I_\mu^*(N).
\eeq

Since we have for any $1 \leq \mu < \nu \leq k_0$ and any $n \in [N, 2N)$
\beq
\label{eq:5.7}
I_\mu^*(N) \subset I_\mu(n),
\eeq
the fact that by the Main Theorem $h_\mu - h_\nu = d_n$ for some $n \in [N, 2N)$, $1 \leq \mu < \nu \leq k_0$ contradicts to \eqref{eq:5.4} and thus proves Theorems \ref{th:3} and \ref{th:4}.

\section{Comparison of two consecutive values of $d_n$}
\label{sec:6}

1. Proof of Theorem \ref{th:5}

Since the proof of the two inequalities are completely analogous, we will only prove the second one.
The basis of it is the Main Theorem proved in Section~3.
We can start with an arbitrary admissible $k$-tuple $\mathcal H = \mathcal H_k = \{h_i\}_{i = 1}^k$, $h_1 < h_2 < \dots < h_k$ with $k \geq 3.5\cdot 10^6$.
Let with a fixed sufficiently small $c_1(k)$ define
\beq
\label{eq:6.1}
B(i,j,N) = \left\{n \leq N, n + h_i \in \mathcal P, n + h_j \in \mathcal P, {\mathcal P}^-(\mathcal P_{\mathcal H}(n)) > n^{c_1(k)}\right\},
\eeq
\beq
\label{eq:6.2}
\mathcal T = \left\{ (i,j); j > i, \limsup_{N \to \infty} \frac{|B(i, j, N)|\log^k N}{N} > 0\right\}
\eeq
and let us choose the pair $(i,j)$ with maximal value of $j$, afterwards that with maximal value of $i < j$.
Then for any $h_\mu \in (h_i, h_j)$ (i.e.\ $i < \mu < j$) we have clearly
\beq
\label{eq:6.3}
\limsup_{N \to \infty} \frac{|B(\mu, j, N)| \log^k N}{N} = 0
\eeq
so all components $n + h_\mu$ between $n + h_i$ and $n + h_j$ are almost always composite if $n \in B(i, j, N)$ and $N = N_\nu \to \infty$ through a suitable sequence $N_\nu$.

On the other hand if we have an arbitrary $h \in (h_i, h_j)$, $h \notin \mathcal H$, then the assumption $n + h \in \mathcal P$ implies for $\mathcal H^+ = \mathcal H \cup h$
\beq
\label{eq:6.4}
P^-\bigl(P_{\mathcal H^+}(n)\bigr) > n^{c_1(k)}.
\eeq
However, by Lemma~\ref{lem:3} the number of such $n \leq N$ is for all $N$
\beq
\label{eq:6.5}
\ll_{k, c_1} \frac{\mathfrak S(\mathcal H \cup \{h\})N}{\log^{k + 1}N} \ll_{k, c_1} \frac{\mathfrak S(\mathcal H)N \log h_k}{\log^{k + 1} N} \ll_{k, c_1, \mathcal H} \frac{N}{\log^{k + 1} N}.
\eeq
This, together with \eqref{eq:6.3} shows that we have at least
\beq
\label{eq:6.6}
\bigl(c_1(k, \mathcal H) + o(1)\bigr) \frac{N}{\log^k N}
\eeq
values $n \leq N$ with $n + h_i$, $n + h_j$ being consecutive primes for some sequence $N = N_\nu \to \infty$.

Let us consider now these differences.
Let
\beq
\label{eq:6.7}
n + h_i = p_\nu \in \mathcal P, \quad n + h_j = p_{\nu + 1} \in \mathcal P, \quad
d_\nu = h_j - h_i \ll 1
\eeq
where
\beq
\label{eq:6.8}
\log n \sim \log\nu \sim \log N.
\eeq

Suppose now that the second inequality of Theorem \ref{th:5} is false.
Then we have for all those values of $\nu$ with an arbitrary $\varepsilon > 0$ for $N > N(\varepsilon)$
\beq
\label{eq:6.9}
d_{\nu + 1} \leq d_\nu \varepsilon \log N \leq C \varepsilon \log N, \ \ C = h_k - h_1 .
\eeq
The already quoted sieve of Selberg (Lemma \ref{lem:3}) gives an upper estimate how often this might happen for any particular value
\beq
\label{eq:6.10}
d_{\nu + 1} = d \leq C \varepsilon \log N.
\eeq
Adding it up until $C \varepsilon \log N$ we obtain at most
\beq
\label{eq:6.11}
\ll_{k, c_1} \frac{N}{\log^{k + 1} N} \sum_{h = 1}^{C \varepsilon \log N} \mathfrak S(\mathcal H \cup h) \ll_{k, c_1} \mathfrak S(\mathcal H) \frac{CN}{\log^k N} \varepsilon
\eeq
by Lemma \ref{lem:4}.
This means that in view of \eqref{eq:6.6} this cannot hold for all $N = N_\nu \to \infty$.
This contradiction proves Theorem \ref{th:5}.

\section{Arbitrarily long arithmetic progressions of generalized twin primes}
\label{sec:7}

Proof of Theorem \ref{th:6}

In this case we have to use again our crucial Main Theorem and the rest of the machinery executed in Section~7 of \cite[2010]{Pin2010a}.
This yields the combination of Zhang's theorem with that of Green and Tao \cite[2008]{GT2008}.

\section{How to make Zhang's theorem effective?}
\label{sec:8}

Our last point is that in its original form Zhang's theorem is ineffective since it uses Siegel's theorem.
His result and similarly to it all results of the present work can be made effective in the following way.

According to the famous theorem of Landau--Page there is at most one real primitive character $\chi$ with a modulus $q$ (and the characters induced by it) which might cause ineffectivity in the Bombieri--Vinogradov theorem, and this modulus has to satisfy
\beq
\label{eq:8.1}
(\log X)^2 \leq q \leq (\log X)^{\omega(X)}
\eeq
for any $\omega(X) \to \infty$ as $X \to \infty$.
This modulus can cause any problem only in the case
\beq
\label{eq:8.2}
q \mid P_{\mathcal H}(n).
\eeq

However, our Lemma~\ref{lem:3} says that we can neglect all numbers $n$ with
\beq
\label{eq:8.3}
P^-\bigl(P_{\mathcal H}(n)\bigr) < n^{c_1(k)},
\eeq
so (prescribing additionally $q \nmid d$ in the definition \eqref{eq:3.8} of $\Lambda_R(n; \mathcal H, k, \ell)$) both Zhang's theorem and all our present results become effective.

\begin{remark}
It is an interesting phenomenon that in the sieving process yielding bounded gaps between primes and in all our present results we can choose
\beq
\label{eq:8.4}
\lambda_d = 0
\eeq
if $d$ has either a prime divisor
\beq
\label{eq:8.5}
> N^{c'(k)}
\eeq
or if it has a prime divisor
\beq
\label{eq:8.6}
< N^{c''(k)}.
\eeq
\end{remark}


\begin{thebibliography}{333}

\bibitem{BD1966}
{\sc E. Bombieri, H. Davenport},
{Small differences between prime numbers},
\textit{Proc. Roy. Soc. Ser. A} \textbf{293} (1966), 1--18.

\bibitem{VdC1939}
{\sc J. G. van der Corput},
{\"Uber Summen von Primzahlen und Primzahlquadraten},
\textit{Math. Ann.} {\bf 116} (1939), 1--50.

\bibitem{Dic1904}
{\sc L. E. Dickson},
{A new extension of Dirichlet's theorem on prime numbers},
\textit{Messenger of Math.} (2), {\bf 33} (1904), 155--161.

\bibitem{EH1970}
{\sc P. D. T. A. Elliott,  H. Halberstam},
{A conjecture in prime number theory},
\textit{Symposia Mathematica} Vol. 4 (1970) (INDAM, Rome, 1968/69),
 59--72, Academic Press, London.


\bibitem{Erd1940}
{\sc P. Erd\H os},
{The difference of consecutive primes},
\textit{Duke Math. J.} {\bf 6}  (1940),  438--441.

\bibitem{Erd1948}
{\sc P. Erd\H os},
{On the difference of consecutive primes},
\textit{Bull. Amer. Math. Soc.} (1948), 885--889.

\bibitem{Erd1955}
{\sc P. Erd\H os},
{Some problems on the distribution of prime numbers},
\textit{Teoria dei Numeri, Math. Congr. Varenna, 1954}, 8 pp., 1955.

\bibitem{ET1948}
{\sc P. Erd\H os, P. Tur\'an},
{On some new questions on the distribution of prime numbers},
\textit{Bull. Amer. Math. Soc.} {\bf 54} (1948), 371--378.


\bibitem{FPR2013}
{\sc B. Farkas, J. Pintz and Sz. Gy. R\'ev\'esz},
On the optimal weight function in the Goldston--Pintz--Y{\i}ld{\i}r{\i}m method for finding small gaps between consecutive primes, submitted.

\bibitem{Gal1976}
{\sc P. X. Gallagher},
On the distribution of primes in short intervals,
{\it Mathematika} {\bf 23} (1976), 4--9.

\bibitem{GPY2009}
{\sc D. A. Goldston, J. Pintz, C. Y{\i}ld{\i}r{\i}m},
Primes in tuples. I,
\textit{Ann. of Math. (2)} {\bf 170} (2009), no. 2, 819--862.

\bibitem{GPY2010}
{\sc D. A. Goldston, J. Pintz, C. Y{\i}ld{\i}r{\i}m},
{Primes in tuples. II},
\textit{Acta Math.} {\bf 204} (2010), no. 1, 1--47.

\bibitem{Gre2001}
{\sc G. Greaves}, {\it Sieves in Number Theory}, Springer, Berlin, 2001.

\bibitem{GT2008}
{\sc B. Green, T. Tao},
{The primes contain arbitrarily long arithmetic progressions},
{\it Ann. of Math. (2)} {\bf 167} (2008), no. 2, 481--547.

\bibitem{HR1974}
{\sc H. Halberstam, H.-E. Richert},
{\it Sieve methods}, Academic Press, New York, 1974.

\bibitem{HL1923}
{\sc G. H. Hardy, J. E. Littlewood},
{Some problems of `Partitio
Numerorum', III: On the expression of a number as a sum of primes},
\textit{Acta Math.} {\bf 44} (1923), 1--70.

\bibitem{Mai1988}
{\sc H. Maier},
{Small differences between prime numbers},
\textit{Michigan Math. J.} {\bf 35}  (1988), 323--344.

\bibitem{MP2008}
{\sc Y. Motohashi, J. Pintz},
A smoothed GPY sieve,
{\it Bull. London Math. Soc.} {\bf 40} (2008), no. 2, 298--310.

\bibitem{Pin2010a}
{\sc J. Pintz},
{Are there arbitrarily long arithmetic progressions in the sequence of twin primes?}
\textit{An irregular mind. Szemer\'edi is 70.} Bolyai Soc. Math. Stud. 21, Springer, 2010, pp. 525--559.

\bibitem{Pin2010}
{\sc J. Pintz},
On the singular series in the prime $k$-tuple conjecture, arXiv: 1004.1084v1 [math.NT]


\bibitem{Pin2013}
{\sc J. Pintz},
Some new results on gaps between consecutive primes,
{\it Tur\'an Memorial, Number Theory, Combinatorics and Analysis},
de Gruyter, Berlin, to appear.

\bibitem{Erdoskotet}
{\sc J. Pintz}, {\it Paul Erd\H{o}s and the Difference of Primes.
Erd\H{o}s Centennial}, Bolyai Soc. Math. Stud., Springer, 2013, to appear.

\bibitem{Pol1849}
{\sc  A. de Polignac},
{Six propositions arithmologiques d\'eduites de crible d'\'Eratosthene},
\textit{Nouv. Ann. Math.} {\bf 8} (1849), 423--429.


\bibitem{Ran1940}
{\sc R. A. Rankin},
{The difference between consecutive prime numbers. II},
\textit{Proc. Cambridge Philos. Soc.} {\bf 36} (1940), 255--266.

\bibitem{Ric1954}
{\sc G. Ricci},
{Sull'andamento della differenza di numeri primi consecutivi},
\textit{Riv. Mat. Univ. Parma} {\bf 5}  (1954), 3--54.

\bibitem{Vin1937}
{\sc I. M. Vinogradov},
{Representation of an odd number as a sum
of three prime numbers},
\textit{Doklady Akad. Nauk. SSSR} {\bf 15} (1937), 291--294 (Russian).

\bibitem{Wes1931}
{\sc E. Westzynthius},
{\"Uber die Verteilung der Zahlen, die zu der $n$ ersten
Primzahlen teilerfremd sind},
\textit{Comm. Phys. Math. Helsingfors (5)} {\bf 25} (1931), 1--37.

\bibitem{Zhang2013}
{\sc Ytang Zhang},
Bounded gaps between primes,
{\it Ann. of Math.}, to appear.


\end{thebibliography}
\end{document}